\documentclass{article}
\usepackage[margin=1in]{geometry}
\usepackage{amsmath}
\usepackage{amssymb}
\usepackage{amsthm}
\usepackage{amsfonts}
\usepackage{mathrsfs}
\usepackage{mathtools}
\usepackage{graphicx}
\usepackage{caption}
\usepackage{setspace}
\usepackage{amsthm}
\usepackage[colorlinks=true, allcolors=blue]{hyperref}
\usepackage{fullpage}
\usepackage{comment}
\usepackage{xcolor}
\usepackage{enumerate}
\usepackage[capitalize]{cleveref}
\usepackage{thmtools}

\usepackage{tikz}
%\usepackage[backend=biber,
%style=alphabetic,
%sorting=ynt]
%{biblatex}
%\addbibresource{references.bib}

\newcommand{\Z}{\mathbb Z}

\newcommand{\Q}{\mathbb Q}

\newcommand{\floor}[1]{\lfloor #1 \rfloor}
\newcommand{\sm}{\setminus}
\renewcommand{\P}{\mathcal P}
\newcommand{\A}{\mathcal A}

\newcommand{\wsat}{\operatorname{wsat}}
\newcommand{\wSAT}{\operatorname{wSAT}}
\newcommand{\uwSAT}{\underline{\operatorname{wSAT}}}

\newtheorem{theorem}{Theorem}[section]
\newtheorem{lemma}[theorem]{Lemma}

\newtheorem{prop}[theorem]{Proposition}

\newtheorem*{claim*}{Claim} %unnumbered claim 

\newtheorem{conj}[theorem]{Conjecture}

\theoremstyle{definition}
\newtheorem{definition}[theorem]{Definition}
\newtheorem{remark}[theorem]{Remark}

\begin{document}
\title{Rational values of the weak saturation limit}
\author{Ruben Ascoli\thanks{School of Mathematics, Georgia Institute of Technology, Atlanta, GA 30332. Email: rascoli3@gatech.edu}\ \  and Xiaoyu He\thanks{School of Mathematics, Georgia Institute of Technology, Atlanta, GA 30332. Email: xhe399@gatech.edu.}}
\date{\today}
\maketitle

\begin{abstract}
Given a graph $F$, a graph $G$ is weakly $F$-saturated if all non-edges of $G$ can be added in some order so that each new edge introduces a copy of $F$. The weak saturation number $\wsat(n, F)$ is the minimum number of edges in a weakly $F$-saturated graph on $n$ vertices. Bollob\'as initiated the study of weak saturation in 1968 to study percolation processes, which originated in biology and have applications in  physics and computer science. It was shown by Alon that for each $F$, there is a constant $w_F$ such that $\wsat(n, F) = w_Fn + o(n)$. We characterize all possible rational values of $w_F$, proving in particular that $w_F$ can equal any rational number at least $\frac 32$.
\end{abstract}

\section{Introduction}

Given a (finite, simple) graph $F$, an \emph{$F$-bootstrap percolation process} on a graph $G$ is a sequence $G = G_0 \subseteq G_1 \subseteq \ldots \subseteq G_m$ where $G_{i}$ is obtained from $G_{i-1}$ by adding an edge which creates a new copy of $F$. Bollob\'as initiated the study of $F$-bootstrap percolation processes in 1968 \cite{Bol68} as a graph-theoretic analogue of self-reproducing cellular automata from biology \cite{vn} and percolation models prevalent in physics, see e.g. \cite{adler}. Monotone cellular automata such as bootstrap percolation have received significant attention in both mathematics and statistical physics with the resolution of the so-called universality conjecture \cite{BBMS}.

A graph $G$ on $n$ vertices is \emph{weakly $F$-saturated} (or just weakly saturated if $F$ is clear from context) if there exists an $F$-bootstrap percolation process $G = G_0 \subseteq G_1 \subseteq \ldots \subseteq G_m = K_n$. For given $F$ and $n \ge 1$, the \emph{weak saturation number} $\wsat(n, F)$ is the minimum number of edges in a weakly saturated graph on $n$ vertices.
%
%Let $\delta_F$ denote the minimum degree of $F$. 
We denote by $\wSAT(n,F)$ the set of all weakly saturated graphs on $n$ vertices, and by $\uwSAT(n,F)$ the subset of $\wSAT(n,F)$ of \emph{minimum} weakly saturated graphs, i.e. those with exactly $\wsat(n,F)$ edges.

 Determining the exact value of $\wsat(n, F)$ proves to be a difficult problem in general: even the value of $\wsat(n, K_s)$, which is the original problem posed by Bollob\'as, was only determined by Lov\'asz~\cite{Lov} in 1977, who showed that $\wsat(n, K_s) = \binom{n}{2}-\binom{n-s+2}{2} = (s-2)n - \binom{s-1}{2}$. This problem continued to attract significant attention even after its resolution; new proofs were given by Alon \cite{Alon}, by Frankl \cite{Fra}, and two different proofs were discovered by Kalai \cite{Kal84, Kal85}. The last decade has seen much progress in this direction for various classes of graphs, including stars, complete bipartite graphs, and other small graphs; see for example \cite{FG14, FGJ, PC16, KMM, XW23}. 
 Recent work \cite{ST23, tz24, t25} has also studied the weak saturation number for hypergraphs.
 However, exact values of $\wsat(n,F)$ are still only known for very specific graphs. 

As for the asymptotics of $\wsat(n,F)$, Alon~\cite{Alon} showed via a subadditivity argument that the limit $$w_F :=\lim_{n\to\infty}\dfrac{\wsat(n,F)}{n},$$
which we call the \textit{weak saturation limit} of $F$, always exists. 
Faudree, Gould, and Jacobson~\cite{FGJ} showed that $w_F$ is closely controlled by the minimum degree $\delta_F$ of $F$, via the inequalities
\begin{equation}\label{eq:delta-bound}
\frac{\delta_F}{2}-\frac{1}{\delta_F+1}\leq w_F  \leq \delta_F-1.
\end{equation}
Recently, Terekhov and Zhukovskii~\cite{tz23} discovered an error in the proof of the lower bound in \eqref{eq:delta-bound} and gave a new proof. They also asked how the values of $w_F$ are distributed within the interval~\eqref{eq:delta-bound}, and constructed explicit examples showing that $w_F$ can take on any multiple of $\frac{1}{\delta_F+1}$ within the interval.

The main purpose of this paper is to fully characterize the possible rational values of $w_F$.

\begin{theorem}\label{thm:main}
The rational values of the weak saturation limit $w_F$ are precisely 
$$\{0\} \cup \{1-1/k: k\geq 3\}\cup \{1\}\cup\{3/2 - 1/k: k\geq 4 \text{ even}\} \cup \big(\Q\cap [3/2, \infty)\big).$$
\end{theorem}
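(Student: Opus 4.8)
The plan is to prove the characterization in two directions: first that each value in the claimed set is attainable as some $w_F$, and second that no other rational value can occur. For the constructions, I would build on the examples of Terekhov and Zhukovskii~\cite{tz23} that realize multiples of $1/(\delta_F+1)$. The endpoint cases $w_F = 0$ (e.g. $F$ a star or a forest with an isolated-ish structure), $w_F = 1 - 1/k$ (take $F$ with $\delta_F = 2$ and engineer the correction term, e.g. $F = K_{2,k-1}$ or a suitable theta-graph), and $w_F = 1$ should come from small-minimum-degree graphs where \eqref{eq:delta-bound} is tight or nearly so. The values $3/2 - 1/k$ for even $k$ should require a more delicate gadget with $\delta_F = 3$; here one needs to identify exactly which multiples of $1/4$ (and then, by a blow-up/join trick, which finer rationals) are achievable, and rule out the odd-$k$ analogues. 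For the dense regime $\Q \cap [3/2,\infty)$, the natural approach is: given a target rational $p/q \ge 3/2$, design $F$ by starting from a clique or complete multipartite graph (whose weak saturation limit is a known integer or half-integer by Lov\'asz-type results) and then modifying it — adding pendant structures, subdividing, or taking disjoint-union/join combinations — to shift $w_F$ by controlled rational amounts, using subadditivity-type identities for $w_F$ under graph operations.

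The key structural tool I expect to need is a clean formula or set of inequalities for how $w_F$ behaves under natural operations: $w_{F_1 \cup F_2}$ (disjoint union), $w_{F + K_1}$ (adding a dominating vertex), $w$ of a subdivision, and $w$ of a graph with a pendant vertex or pendant edge attached. In particular, adding a universal vertex to $F$ should increase the relevant minimum-degree parameter by one and shift $w_F$ in a predictable way, which is the mechanism to climb from the base interval up to all of $[3/2,\infty)$. I would formalize a lemma saying roughly: if $w_{F} = \alpha$ and $F'$ is obtained from $F$ by a specified gadget attachment, then $w_{F'} = f(\alpha)$ for an explicit affine (or piecewise-affine) function $f$; iterating and combining these gives density of attainable values, and then a compactness/limit argument (or direct construction) pins down exactly the rationals listed. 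The denominators $k$ in the sporadic families should emerge from the size of the smallest gadget forced by the minimum-degree constraints $\delta_F \in \{1,2,3\}$.

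The hard part will be the lower-bound / impossibility direction: showing that no rational $w_F$ lies in the gaps $(0,1-1/3)$, $(1-1/k, 1-1/(k+1))$ below... wait, more precisely showing $w_F \notin (0, 2/3)$, that in $(2/3,1)$ only the values $1-1/k$ occur, that $w_F \ne 1 + t$ for $t \in (0, 1/2)$ except the even-$k$ family $3/2 - 1/k < 3/2$, and that nothing in $(1, 3/2)$ other than that family is hit. This requires sharp \emph{lower} bounds on $\wsat(n,F)$ that go beyond \eqref{eq:delta-bound} — presumably via the exterior-algebra / matroid method of Kalai and Alon, assigning generic vectors or tensors to vertices so that the copies of $F$ being created impose independent linear constraints, forcing $\wsat(n,F) \ge cn - O(1)$ with $c$ close to $\delta_F - 1$ unless $F$ has very special structure (e.g. a vertex of degree $\delta_F$ whose neighborhood is highly constrained). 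One would then show that the only ways $w_F$ can dip below $\delta_F - 1$ correspond to a short list of local configurations around minimum-degree vertices, each contributing a specific rational of the form $\delta_F/2 - 1/(\delta_F+1)$-type or $\delta_F - 1 - 1/k$-type, and when $\delta_F \ge 4$ even the smallest such correction already lands in $[3/2,\infty)$ so imposes no new constraint.

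A reasonable order of execution: (1) record the operation lemmas for $w_F$ under union, join with $K_1$, pendant attachments, and subdivision; (2) handle $\delta_F \le 3$ in detail, getting both the constructions and the matching lower bounds that produce exactly $\{0\} \cup \{1-1/k\} \cup \{1\} \cup \{3/2 - 1/k : k \text{ even}\} \cup (\Q \cap [1,3/2])$-ish worth of small values; (3) for $\delta_F \ge 4$, prove $w_F \ge 3/2$ always (in fact $\ge \delta_F/2 - 1/(\delta_F+1) \ge 3/2$) so these contribute only values already in $\Q \cap [3/2,\infty)$; (4) show every rational $\ge 3/2$ is achieved by an explicit family, e.g. iterated joins of cliques with small gadgets tuned to the target denominator; (5) assemble the pieces and verify the set equality, being careful about the boundary points $2/3$, $1$, and $3/2$ and about which $k$ are admissible in each sporadic family. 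I expect step (3)'s lower bound and the exact identification of the $\delta_F = 3$ sporadic values in step (2) to be where essentially all the difficulty lies.
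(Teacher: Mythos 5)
Your sketch correctly identifies the overall shape (constructions plus a matching impossibility result, organized by $\delta_F$), but two of its load-bearing steps do not exist in the form you need and are, in fact, replaced in the paper by different machinery. First, you want ``operation lemmas'' that predict $w_{F_1 \sqcup F_2}$, $w_{F + K_1}$, $w$ of a subdivision, etc., as explicit functions of $w_F$. No such formulas are available, and there is no reason to expect them: $w_F$ is defined through an asymptotic extremal quantity and is not functorial under these operations. The paper sidesteps this entirely by introducing the combinatorial surrogate $\gamma_F = \min_S (m_F(S)-1)/|S|$, proving that $\gamma_F$ is always a lower bound for $w_F$ (Terekhov--Zhukovskii) and, crucially, that for any $F$ one can build an $\tilde F$ with the same minimum degree satisfying $w_{\tilde F} = \gamma_F$ exactly (Lemma~\ref{lem:f-tilde}, via disjoint unions with all spanning supergraphs). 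This reduction is what makes the dense-regime constructions tractable: one designs $F$ so that $\gamma_F$ hits the target rational, which is a finite optimization, rather than trying to compute $w_F$ directly.

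Second, your proposed route to the sparse-regime impossibility (``exterior-algebra / matroid method of Kalai and Alon'') is known not to be sharp enough in general; indeed the cited paper \cite{tz24} is precisely about the failure of the linear-algebraic approach to pin down $\wsat$. The paper's actual argument for showing that the only values below $\delta/2$ are $\delta/2 - 1/k$ (with the parity and size constraints on $k$) is a new combinatorial ``rotation'' procedure: one fixes a minimum weakly saturated graph $G$, tracks an activation partition $\A$ coming from the percolation process, shows that one can swap edges owned by parts of $\A$ without leaving $\uwSAT(n,F)$, and extracts a sparse connected component whose density forces $w_F$ into the admissible discrete set (Lemma~\ref{lem:goodpartition}). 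Nothing in your proposal resembles this, and without it the ``gap'' statements you want in $(0,2/3)$, $(1-\tfrac1k,1-\tfrac1{k+1})$, and $(1,3/2)$ have no proof. For the dense regime you would also need an explicit family realizing every rational $\gamma$-value in the target interval; the paper uses random $\delta$-regular expanders attached to a large clique for $\delta\ge 6$ and carefully subdivided Cayley graphs for $\delta = 3,4$, and the feasibility hinges on edge-expansion estimates, not on pendant/subdivision calculus applied to cliques.
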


Surprisingly, any rational number at least $3/2$ is attainable by $w_F$, while below $3/2$, the situation is more discrete, with only a single accumulation point at $1$.
\cref{thm:main} breaks naturally into two cases, which together answer the question of Terekhov and Zhukovkii for rational values. 

\begin{theorem}[Sparse Regime]\label{thm:sparse} Suppose $\delta \ge 2$ and $w < \delta / 2$. There exists a graph $F$ with $\delta_F = \delta$ and $w_F = w$ if and only if $w = \delta/2 - 1/k$ where $k$ is the number of vertices in some $\delta$-regular graph, i.e. $k\ge \delta + 1$ and $k$ must be even if $\delta$ is odd.
\end{theorem}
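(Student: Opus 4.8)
Write $\delta := \delta_F$. For any weakly $F$-saturated graph $G$ on $n$ vertices, two facts drive everything. (i) $\delta(G)\ge\delta-1$: a vertex of degree $\le\delta-2$ could never receive a first new edge lying in a copy of $F$. (ii) \emph{Deficient-vertex lemma}: if $v$ has degree exactly $\delta-1$ in $G$ --- call such $v$ \emph{deficient} --- then the first new edge $vu$ at $v$ lies in a copy of $F$ in which $v$ has degree exactly $\delta$, so $v$ occupies a minimum-degree vertex of $F$ and its neighborhood in that copy is exactly $N_G(v)\cup\{u\}$. Since $2|E(G)|\ge\delta n-|D|$ for $D$ the set of deficient vertices, everything reduces to controlling $|D|$. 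By \eqref{eq:delta-bound} we may assume $\tfrac\delta2-\tfrac1{\delta+1}\le w<\tfrac\delta2$; in particular any $k$ we produce with $w=\tfrac\delta2-\tfrac1k$ automatically satisfies $k\ge\delta+1$, and in both directions $k$ will arise as the order of an actual $\delta$-regular graph, so $\delta k$ is even --- which is exactly the asserted description of $k$.

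\textbf{Sufficiency.} Given $\delta$ and an admissible $k$, fix a $\delta$-regular graph $H$ on $k$ vertices, an edge $e_0=xy$ of $H$, and a chain $H=J_0\subsetneq J_1\subsetneq\cdots\subsetneq J_m=K_k$ adding one edge at a time. Take $F$ to be the disjoint union of $H$, of $J_1,\dots,J_{m-1}$, and of one further ``bridge'' component $B$ of minimum degree $\delta$ possessing a bridge $pq$ whose two sides embed into $K_k$ (for $\delta\ge3$, $B=$ two copies of $K_{\delta+1}$ joined by a bridge; for $\delta=2$, $B=$ two triangles joined by a bridge); then $\delta_F=\delta$. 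For the upper bound take $G_n$ to be $\approx n/k$ vertex-disjoint copies of $H-e_0$ together with one copy of $F$ (and, if $k\nmid n$, one enlarged block absorbing the $O(1)$ leftover vertices), so $|E(G_n)|=(\tfrac\delta2-\tfrac1k)n+O(1)$; one checks $G_n$ percolates to $K_n$ by re-adding $e_0$ in each block (creating the $H$-component, the other components of $F$ supplied by the embedded copy of $F$), then running the chain $J_i\to J_{i+1}$ inside each block, then bridging blocks together in pairs using $B$, and finally --- with a clique of size $\ge\delta+1$ in hand --- completing to $K_n$ in the manner of Lov\'asz. For the matching lower bound I would show every component $C$ of every weakly $F$-saturated graph satisfies $|E(C)|\ge(\tfrac\delta2-\tfrac1k)|V(C)|-O(1)$: combine $\delta(C)\ge\delta-1$ with the fact --- engineered by the precise choice of the $J_i$ and of $B$ --- that a component too sparse relative to $k$ admits no legal move at all, then sum over components.

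\textbf{Necessity.} Let $F$ have $\delta_F=\delta$ and $w:=w_F<\tfrac\delta2$, and take a sequence $G_n$ of minimum weakly $F$-saturated graphs. The plan is to show that, after deleting $o(n)$ vertices, $G_n$ decomposes into vertex-disjoint \emph{blocks}, each a connected subgraph containing all of its deficient vertices in which adding one edge between two deficient vertices creates a copy of some component of $F$ (that edge sitting in a minimum-degree role). The deficient-vertex lemma plus a charging argument --- each deficient vertex pulls in $\delta-1$ neighbors that cannot all themselves be deficient, and in a minimum graph the copies of $F$ certifying distinct deficient vertices must coalesce into blocks rather than sprawl --- forces each block $B'$ to satisfy $|E(B')|\ge\tfrac\delta2|V(B')|-1$, with equality precisely when $B'$ has two non-adjacent deficient vertices and all other vertices of degree $\delta$, i.e.\ when $B'$ plus the edge between its two deficient vertices is $\delta$-regular. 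Since $w<\tfrac\delta2$, extremal blocks must occur; letting $k$ be the minimum order of one ($k\ge\delta+1$), summing over blocks gives $\wsat(n,F)\ge(\tfrac\delta2-\tfrac1k)n-o(n)$, while minimality forces $G_n$ to be, up to $o(n)$ vertices, a disjoint union of order-$k$ extremal blocks (a larger extremal block could be replaced by several order-$k$ ones at strictly lower cost), yielding the matching upper bound. Hence $w=\tfrac\delta2-\tfrac1k$ with $k$ the order of a $\delta$-regular graph.

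\textbf{Main obstacle.} The crux is twofold. On the construction side, the gadget components $J_1,\dots,J_{m-1}$ and $B$ must be simultaneously rich enough to push percolation all the way to $K_n$ and lean enough not to secretly permit a sparser weakly $F$-saturated graph; reconciling these --- and when $\delta=2$ (where $H-e_0$ can be a forest) verifying that a cyclic ``seed'' is genuinely both necessary and sufficient, and separately handling $2$-edge-connected $F$, which forces \emph{connected} weakly $F$-saturated graphs so that blocks must overlap rather than be disjoint --- is the delicate part. On the analysis side the obstacle is the \emph{rigidity} statement that a minimum weakly $F$-saturated graph really is block-structured with $\delta$-regular blocks, i.e.\ that no global configuration beats disjoint blocks. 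I expect this to need a linear-algebraic lower bound in the spirit of Lov\'asz, Kalai, and Alon: assign to each edge $ij$ a vector built from generic $x_i\in\R^{\delta-1}$ so that the minimum-degree condition makes every edge of every copy of $F$ dependent on the remaining edges of that copy, and then amplify the rank of the resulting system by per-component bookkeeping to capture the $-\tfrac1k n$ gain over the naive $\tfrac{\delta-1}{2}n$ bound --- fed by the combinatorial input that sufficiently small or sparse components are impossible.
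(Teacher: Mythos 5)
Your proposal takes a genuinely different route from the paper's on both directions, and in each case there is a real gap. On sufficiency, the paper does not build an ad hoc gadget $F$ with chain components $J_i$ and a bridge $B$: it takes any 2-edge-connected $\delta$-regular $H$ on $k$ vertices, computes $\gamma_H=\delta/2-1/k$ by a one-line isoperimetric observation, and then invokes Lemma~\ref{lem:f-tilde} (the $\tilde F$ = disjoint union of $H$ with all its spanning supergraphs) to produce a graph with $w_{\tilde F}=\gamma_H$ \emph{exactly} — the lower bound is free from Lemma~\ref{lem:tzbound}. Your construction might percolate, but you leave the matching lower bound as ``I would show,'' and the claim that every component of every weakly $F$-saturated graph has density $\ge \delta/2-1/k$ is not something your gadget choices visibly guarantee; this is the part that actually has content, and your sketch does not address it.

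On necessity, the conceptual picture (sparse blocks, sum over them) is correct, but the step you flag as the ``main obstacle'' — that a minimum weakly $F$-saturated graph really decomposes into vertex-disjoint extremal blocks — is the whole theorem, and your charging heuristic does not produce it. A priori the copies of $F$ certifying different activations can overlap arbitrarily; the paper introduces the rotation machinery (activation partition $\A$, ownership, $\A$-matchings, Lemma~\ref{lem:rotation}, and the technical Lemma~\ref{lem:goodpartition} with its helper claim about out-connected mixed multigraphs) precisely to \emph{rearrange} the percolation witnesses so that a sparse part can be cleanly disconnected and copied. Your fallback suggestion of a Lov\'asz/Kalai/Alon exterior-algebra bound is a different tool and there is no indication it would capture the $-n/k$ correction or the parity restriction; such rank arguments typically yield $cn$ lower bounds without exposing the quantized block structure. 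Finally, your parity claim (``$\delta k$ is even — which is exactly the asserted description'') is circular: the paper derives the parity of $|p_0|$ from equality forcing each sub-part $a_i$ to own exactly $\delta$ edges at each of its vertices, which you do not argue.
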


For $w \ge \delta/2$, the behavior is radically different.

\begin{theorem}[Dense Regime]\label{thm:dense}
For any integer $\delta \geq 2$ and any rational $w\in[\delta/2, \delta-1]$, there exists a graph $F$ with $\delta_F = \delta$ and $w_F = w$.
\end{theorem}

By \eqref{eq:delta-bound}, we have $w_F = 0$ if $\delta = 1$, so \cref{thm:main} follows immediately from \cref{thm:sparse} and \cref{thm:dense}. 
Roughly speaking, \cref{thm:sparse} and \cref{thm:dense} together prove the existence of extremal bootstrap percolation processes of every possible ``rational" or ``periodic" behavior, and \cref{thm:sparse} additionally disallows the possibility of ``aperiodic" behaviors in the sparse regime. It would be extremely interesting to prove that all extremal processes are periodic in the dense regime as well; see \cref{conclusion} for more discussion of this problem.

Alon's~\cite{Alon} subadditivity argument showed that for any graph $F$, we have $\wsat(n,F) = w_Fn + o(n)$. In 1992, Tuza conjectured that the lower order term is in fact $O(1)$, which is of the same spirit as the conjecture that all extremal processes are periodic.
\begin{conj}[\cite{Tuza}] \label{conj:tuza} For any graph $F$, there exists $c_F$ such that $w_Fn - c_F \leq \wsat(n, F) \leq w_Fn + c_F$ for all large enough $n$.
\end{conj}
The lower inequality is easy to show; see \cite{tz23} for a short proof, as well as a discussion of for which classes of graphs the conjecture is known. 
We establish Tuza's conjecture when $w_F$ in the ``sparse regime."
\begin{theorem}\label{thm:sparse tuza}
    For any graph $F$, if $w_F < \delta_F/2$, then $\wsat(n,F) = w_Fn + O(1)$.
\end{theorem}
This paper is organized as follows. In \cref{sec:prelim}, we prove our main technical lemma, inspired by~\cite{tz23}, for constructing $F$ with a target value of $w_F$. In \cref{sparse section}, we prove \cref{thm:sparse}, where the main difficulty is showing that no small values of $w_F$ are possible other than the specific family described. \cref{thm:sparse tuza} follows as an easy consequence of that proof. In \cref{dense section}, we prove \cref{thm:dense} by showing that any rational value in the target interval can be achieved; the construction is obtained by carefully modifying a Cayley graph and attaching it to a large clique. We conclude the paper in \cref{conclusion} with a few of the fascinating open problems that remain in this area.

\section{Preliminaries} \label{sec:prelim}
In this section we prove our main technical tool for constructing $F$ with a given weak saturation limit. We begin with the following definitions, slightly modified from \cite{tz23}.

\begin{definition}
For a subset $S$ of $V(F)$, let $m_F(S)$ denote the number of edges in $F$ with at least one endpoint in $S$. Given a nonempty subset $S\subseteq V(F)$, define $\gamma_F(S) = (m_F(S)-1)/|S|$, and let $\gamma_F = \min_{\emptyset\subsetneq S\subseteq V(F)} \gamma_F(S)$.
\end{definition}

Terekhov and Zhukovskii related $w_F$ to $\gamma_F$ as follows.
\begin{lemma}[{\cite[Theorem 2.1 and Section 3.1]{tz23}}]
\label{lem:tzbound}
For any graph $F$, $w_F \ge \gamma_F$.
\end{lemma}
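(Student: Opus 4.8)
The plan is to prove the lower bound $w_F \ge \gamma_F$ by exhibiting, for every $n$, a linear-algebraic certificate that lower-bounds $\wsat(n,F)$. The standard tool here is the "exterior algebra" or "matroid/linear algebra" method pioneered by Kalai and Alon: if one can assign to each potential edge $e$ of $K_n$ a vector $v_e$ in some vector space $W$ such that whenever an edge $e$ completes a new copy of $F$ (i.e. $e$ together with $|E(F)|-1$ already-present edges spans a copy of $F$), the vector $v_e$ lies in the span of the vectors of those other edges, then any weakly $F$-saturated graph $G$ must have $\{v_e : e \in E(G)\}$ spanning all of $\{v_e : e \in E(K_n)\}$. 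Consequently $\wsat(n,F) \ge \dim \operatorname{span}\{v_e : e \in E(K_n)\}$, and it suffices to construct such a vector assignment whose span has dimension at least $\gamma_F \cdot n - O(1)$.

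**The construction of the vectors.** Let $S$ be a subset of $V(F)$ achieving the minimum in the definition of $\gamma_F$, so $m_F(S) = |S|\gamma_F + 1$; write $s = |S|$. I would work over a generic/random configuration: assign to each vertex $x \in [n]$ a generic vector $p_x$ in a space of dimension roughly $\gamma_F$ (more precisely, one wants the edge-space to have dimension $\approx \gamma_F n$), and define $v_e$ for $e = xy$ to be a suitable tensor or wedge built from $p_x, p_y$ so that the local linear dependence mirrors the combinatorial fact that a copy of $F$ on vertex set $\{u_1,\dots\}$ has, after deleting any one edge incident to the image of $S$, at most $m_F(S) - 1 = |S|\gamma_F$ "free" edges touching the image of $S$. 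The key point, exactly as in \cite{tz23}, is that for a copy of $F$ with the newly-added edge $e$ incident to (the image of) $S$, the number of other $F$-edges incident to that image is $m_F(S) - 1$, and one chooses the dimension of $W$ and the map $e \mapsto v_e$ so that $m_F(S)-1$ vectors cannot be "too independent" on the $s$ vertices of the image — forcing the needed dependence $v_e \in \operatorname{span}(\text{others})$. When $e$ is not incident to the image of $S$, a separate (easier, or generic-position) argument gives the dependence using the global bound $\gamma_F \le \gamma_F(V(F))$, i.e. $m_F(V(F)) - 1 \le \gamma_F |V(F)|$; since $m_F(V(F)) = |E(F)|$, this says every copy of $F$ has at most $\gamma_F |V(F)|$ edges, which again caps the independence.

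**Executing the dimension count.** Having built the $v_e$, one checks two things: (i) the span of all $v_e$ over $K_n$ has dimension $\ge \gamma_F n - C$ for a constant $C = C(F)$ — this follows from genericity of the $p_x$ together with the fact that $\gamma_F \le \gamma_F(S')$ for every $S'$, which controls the rank from below by a counting/Hall-type argument on which edges can be dependent; and (ii) the local dependence property above. Combining, $\wsat(n, F) \ge \gamma_F n - C$ for all $n$, hence dividing by $n$ and letting $n \to \infty$ gives $w_F = \lim \wsat(n,F)/n \ge \gamma_F$.

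**The main obstacle.** The delicate part is arranging the vector assignment so that the local dependence holds \emph{simultaneously} for every possible copy of $F$ and every choice of which incident edge is the "new" one, while \emph{also} keeping the global dimension as large as $\gamma_F n - O(1)$ — these pull in opposite directions, and it is precisely the optimality of $S$ (minimality of $\gamma_F(S)$ over all $S$, including $S = V(F)$) that reconciles them. I expect the cleanest route is to follow \cite[Theorem 2.1]{tz23} closely: define $W$ as a direct sum indexed by vertices with an extra "global" coordinate block of bounded dimension, let $v_{xy}$ record generic linear data in the blocks of $x$ and $y$ plus a generic contribution to the global block, and verify that any $F$-copy's edge-vectors satisfy a single linear relation coming from the defining equation $m_F(S) - 1 = \gamma_F(S)\cdot|S| \ge \gamma_F \cdot |S|$. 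Since the lemma is quoted from \cite{tz23}, I would present this as a sketch and defer the full verification to that reference, noting only the two inequalities $m_F(S) - 1 = \gamma_F |S|$ (for the optimal $S$) and $m_F(V(F)) - 1 \ge \gamma_F |V(F)|$ that drive the argument.
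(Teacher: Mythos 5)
The paper gives no proof of this lemma at all: it is stated as a black-box citation to \cite[Theorem~2.1 and Section~3.1]{tz23}, and the present paper relies on it only as an external ingredient. So there is no in-paper argument to compare against, and the question is whether your sketch could stand in for the cited proof.

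It cannot, for two reasons. First, and most substantively, you are proposing the Kalai--Alon linear-algebraic (exterior-algebra) machinery, but the reference you are deferring to is explicitly titled ``a combinatorial approach'' --- the whole point of \cite{tz23} is to prove $w_F\ge\gamma_F$ \emph{without} linear algebra, and the companion paper \cite{tz24}, titled ``Weak saturation rank: a failure of linear algebraic approach to weak saturation,'' shows that the linear-algebraic lower bound can fall strictly below the truth. Your plan --- assign generic vectors $v_e$ so that $\wsat(n,F)\ge\dim\operatorname{span}\{v_e\}$ --- is exactly the ``weak saturation rank'' quantity that \cite{tz24} shows need not reach $\gamma_F n - O(1)$, so there is a genuine conceptual obstruction, not merely a missing computation. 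The actual argument in \cite{tz23} proceeds combinatorially, tracking how many edges must be ``present'' around each newly activated vertex set during the bootstrap process (closely analogous to the activation-partition bookkeeping this paper develops later in \cref{lem:goodpartition}), and never constructs a spanning family of edge-vectors.

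Second, even granting your chosen framework, the proposal is not a proof: the space $W$ is never specified, the vectors $v_e$ are never defined, neither the local dependence property nor the dimension lower bound $\ge\gamma_F n - C$ is verified, and the writeup ends by deferring ``the full verification to that reference.'' As it stands the proposal identifies the correct two arithmetic inputs ($m_F(S)-1=\gamma_F|S|$ for an optimal $S$ and $m_F(T)-1\ge\gamma_F|T|$ for all $T$) but supplies no mechanism for turning them into a rank bound, and the mechanism you gesture at is one the literature has shown to be insufficient for this statement.
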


Although $w_F$ and $\gamma_F$ are not always equal, our key lemma states that every value attained by $\gamma$ is attained by $w$.

\begin{lemma}\label{lem:f-tilde}
Given a graph $F$, there exists a graph $\tilde F$ with $\delta_{\tilde F} = \delta_F$ and $w_{\tilde F} = \gamma_F$.
\end{lemma}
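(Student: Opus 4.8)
The plan is to pick a subset $S\subseteq V(F)$ achieving $\gamma_F(S)=\gamma_F$, set $s=|S|$ and $m=m_F(S)$ so that $\gamma_F=(m-1)/s$, and build $\tilde F$ as a large clique with a small ``gadget'' on $s$ new vertices attached. Since \cref{lem:tzbound} gives $w_{\tilde F}\ge\gamma_{\tilde F}$, it suffices to produce $\tilde F$ with (i) $\delta_{\tilde F}=\delta_F=:\delta$, (ii) $\gamma_{\tilde F}=\gamma_F$, and (iii) $w_{\tilde F}\le\gamma_F$; the last will come from weakly $\tilde F$-saturated graphs on $n$ vertices with $\gamma_F\, n+o(n)$ edges. (If $\delta=1$ then $w_F=0$ by \eqref{eq:delta-bound}, $\gamma_F=0$, and $\tilde F=K_2$ works, so assume $\delta\ge 2$.) Two elementary facts about the minimizer are used repeatedly: $m\ge s\delta/2$, obtained by adding $m_F(S)=\sum_{v\in S}\deg_F(v)-e(F[S])\ge s\delta-e(F[S])$ to $m_F(S)\ge e(F[S])$; and $m\le s(\delta-1)+1$, since $\gamma_F\le w_F\le\delta-1$.

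For the construction, let $\tilde F$ consist of a clique $Q$ on $N$ vertices (with $N$ a large constant to be chosen), a gadget $D$ on $s$ new vertices having exactly $m$ edges incident to $V(D)$ --- namely $a:=\max(0,s\delta-m)$ edges inside $D$ and $m-a$ edges from $V(D)$ to $Q$ --- and one extra vertex $w$ joined to $\delta$ vertices of $Q$. The inequalities above give $0\le a\le\binom{s}{2}$ and guarantee that the $m$ edges incident to $V(D)$ can be distributed so that every vertex of $D$ has degree at least $\delta$ in $\tilde F$; since $w$ has degree exactly $\delta$ and every vertex of $Q$ has degree at least $N-1>\delta$, this yields $\delta_{\tilde F}=\delta$, which is (i). Crucially, $D$ is taken to be expander-like --- or, in the boundary case $m=s(\delta-1)+1$, a tree --- so that no proper subset of $V(D)$ induces too many edges.

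For (ii), the set $V(D)$ witnesses $\gamma_{\tilde F}\le(m-1)/s=\gamma_F$. For the reverse inequality we check $\gamma_{\tilde F}(T)\ge\gamma_F$ for every nonempty $T\subseteq V(\tilde F)$. If $T$ meets $Q$, then already the clique edges incident to $T\cap Q$ force $\gamma_{\tilde F}(T)$ to exceed $\delta-1\ge\gamma_F$ once $N$ is large enough (compared to $s$ and $\delta$). If $T\subseteq V(D)$, then $m_{\tilde F}(T)=\sum_{v\in T}\deg_{\tilde F}(v)-e(D[T])\ge \delta|T|-e(D[T])$, and the expansion of $D$ (respectively, $D$ being a forest) ensures $e(D[T])\le(\delta-\gamma_F)|T|-1$, which rearranges to $\gamma_{\tilde F}(T)\ge\gamma_F$; this is tight at $T=V(D)$. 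The remaining cases, where $T$ contains $w$, again only use $\gamma_F\le\delta-1$.

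The heart of the argument is (iii). The natural candidate for a weakly $\tilde F$-saturated host $G_n$ is a fixed clique seed (large enough to contain $Q$ together with $w$) plus roughly $n/s$ vertex-disjoint copies of $D$, where each copy keeps all its internal edges, sends its external stubs into the seed, and is missing exactly one of its $m$ incident edges; this has $\tfrac{m-1}{s}n+o(n)=\gamma_F\, n+o(n)$ edges, as required. Percolation then proceeds in two phases: first, in each copy the single missing edge can be added so as to create a copy of $\tilde F$ (the copy of $D$ plays $D$, and the seed plays both $Q$ and $w$), since everything else that copy of $\tilde F$ requires is already present; second, with every copy now a complete copy of $D$ hanging off the seed, one enlarges the clique by absorbing these copies one vertex at a time, each newly added edge creating a copy of $\tilde F$ in which some gadget vertices play vertices of $D$ and the current clique plays $Q$. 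The main obstacle is this second phase: when $m$ is close to $s\delta/2$ the gadget $D$ is dense with few external edges, so a typical vertex of $D$ begins with too few neighbours in the clique to be absorbed directly, and one must absorb the vertices of a copy in a carefully chosen order, using the connectivity/expansion of $D$ to build up each vertex's clique-degree before it is absorbed. Making this percolation go through --- possibly via a more intricate host than the one sketched above --- is exactly where the ``expander attached to a large clique'' idea is needed, and it is the technically delicate step.
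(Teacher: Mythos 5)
Your approach is genuinely different from the paper's, and unfortunately it has a real gap in the key step that you yourself flag. The paper defines $\tilde F$ to be the \emph{disjoint union of $F$ with all of its spanning supergraphs} (after first augmenting $F$ by a large disjoint clique so there is a vertex $f_0\notin S$ non-adjacent to $S$). This choice may look wasteful, but it is exactly what makes the percolation step (your item (iii)) painless: in the host graph (a clique $K$ plus copies $S_1,\dots,S_i$ of $S$ attached to a fixed set $U\subseteq K$ along an image of $F-e^*$ with $U$ cliqued), one first restores $e^*$ to create a new $F$; then \emph{every} further edge inside $S_i\cup U$ trivially creates a new copy of \emph{some} spanning supergraph of $F$, which is a component of $\tilde F$, with the remaining components supplied by the clique; and edges from $S_i$ out to $K\setminus U$ are handled by the supergraph $F+u'f_0$. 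Because $\tilde F$ contains every possible spanning supergraph as a component, one never has to reason about which specific copy of the target graph a newly added edge completes.

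Your construction, by contrast, builds a \emph{single} graph $\tilde F$ from scratch --- a clique $Q$, a pendant vertex $w$, and an $s$-vertex gadget $D$ chosen only from the numerical data $(s,m,\delta)$ --- and then has to run a percolation where each added edge must complete a copy of this one graph. You explicitly write that the absorption phase ``is exactly where the `expander attached to a large clique' idea is needed, and it is the technically delicate step,'' i.e., you do not carry it out. This is the crux, not a detail: when $m$ is close to $s\delta/2$, a vertex of $D$ may have zero or one external edge, and it is not at all clear that a valid order of absorptions exists creating a fresh copy of $\tilde F$ at each step. Moreover, even item (ii) is not fully justified: you need a graph $D$ on $s$ vertices with $a$ edges, maximum degree at most $\delta$ (so the external edges are non-negative at every vertex), satisfying $e(D[T])\le \frac{(a+1)|T|-s}{s}$ for every proper nonempty $T$, which is a strictly tighter inequality than the expected edge count $\frac{a\,|T|(|T|-1)}{s(s-1)}$ in a random near-regular graph and hence does not follow from generic expansion; the boundary cases ($a=s-1$: tree; $a=s$: cycle) work, but a uniform existence proof for all valid $a$ is not given. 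In short, your proposal sketches a plausible-looking alternative path, but both the existence of the gadget $D$ and, more importantly, the percolation certificate for $w_{\tilde F}\le\gamma_F$ are left unproved, whereas the paper's ``union of all spanning supergraphs'' device eliminates both difficulties at once.
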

\begin{proof}
Let $\emptyset \subsetneq S\subseteq V(F)$ be the vertex subset achieving $\gamma_F = \gamma_F(S)$. We may assume that there exists $f_0\in V(F)\sm S$ which is not adjacent to any vertex in $S$. To accomplish this, we can for example modify $F$ by taking its disjoint union with a very large clique.

Let $\tilde F$ be the disjoint union of all spanning supergraphs of $F$, including $F$ itself. Clearly, $\delta_{\tilde F} = \delta_F$. It remains to show that $w_{\tilde F} = \gamma_F$, which we break into two claims.

%\vspace{0.1cm}
\begin{claim*}
    With $\tilde F$ as above, $w_{\tilde F} \geq \gamma_F$.
\end{claim*}
\begin{proof}
By \cref{lem:tzbound}, it suffices to show that $\gamma_{\tilde F} = \gamma_F$. We first reduce to the case that $T$ lies in a single spanning supergraph of $F$. If $T$ intersects multiple connected components of $\tilde{F}$, then by the formula $\gamma_{\tilde F}(T) = (e(T)-1)/|T|$, if we restrict to the sparsest component of $T$, $\gamma_{\tilde F}(T)$ can only decrease. Now, if $T\subseteq V(\tilde F)$ lies inside one of the supergraphs of $F$ in $\tilde F$, then to minimize $\gamma_{\tilde F}(T)$, we should choose the corresponding subset inside $F$ itself; but then, $T = S$ is the $\gamma_{\tilde{F}}$-minimizing choice by definition of $S$, as desired.
\end{proof}

\begin{claim*} 
With $\tilde F$ as above, $w_{\tilde F} \leq \gamma_F$.
\end{claim*}
\begin{proof}
Define weakly $\tilde F$-saturated graphs $G_i$ as follows. Let $K$ be a clique of size $|V(\tilde F)|+1$, and choose an arbitrary set $U\subseteq V(K)$ of size $|V(F)\sm S|$. Also, fix an arbitrary edge $e^*$ incident to $S$. Let $V(G_i) = K \sqcup S_1 \sqcup \cdots \sqcup S_i$ where each $|S_i| = |S|$. The edges of $G_i$ are such that $K$ forms a clique, and for each $1\le j \le i$, the induced subgraph $G_i[U\cup S_j]$ is an isomorphic copy of $F-e^*$ where the isomorphism identifies $S_j$ with $S$, except that the subgraph on $U$ is replaced by a clique. There are no edges between two different $S_j$.

Clearly, $\lim_{i\to\infty} \frac{e(G_i)}{|V(G_i)|} = \gamma_F(S) = \gamma_F$. We argue that $G_i$ is weakly $\tilde F$-saturated by induction on $i$.
Take $i\geq 1$ and assume $G_{i-1}$ is weakly $\tilde F$-saturated. Using this assumption, we now describe an $F$-bootstrap percolation process that restores all the edges missing from $G_i$. First, since $G_i \sm S_i$ is isomorphic to $G_{i-1}$, by induction we may restore all the missing edges in $G_i$ except for those incident to $S_i$. Let $K'=V(G_i) \sm S_i$, which at this point forms a clique. 

Next, restore the image of the deleted edge $e^*$ incident to $S_i$. This creates a new copy of $\tilde{F}$ since a new copy of $F$ is formed by $e^*$, and the rest of $\tilde F$ is found in $K'$. Next, restore the remaining edges within $S_i$ or with one end in $S_i$ and the other in $U$. Each such edge creates a new copy of $\tilde F$ since a new copy of some supergraph of $F$ is introduced on the vertex set $S_i\cup U$, and the rest of $\tilde F$ can be found in $K'$.     

Finally, restore all remaining edges between $S_i$ and $K'\sm U$. For each such edge $uv$ with $u\in S_i$ and $v\in K'\sm U$, consider the graph $F+u'f_0$, where $u'\in S\subseteq V(F)$ corresponds to $u\in S_i$ and (recall) $f_0$ is the vertex in $V(F)\sm S$ not adjacent to any vertex in $S$. In our percolation process on $G_i$, the edge $uv$ creates a new copy of $F+u'f_0$ on the vertex set $S_i\cup \{v\} \cup U \sm \{f_0'\}$, where $f_0'$ denotes the image of $f_0$ in $U$. 

Thus, each of these remaining edges creates a new copy of $\tilde F$, with its other components found in $K'$ as usual.
\end{proof}

Combining the two claims, the lemma is proved.
\end{proof}
In the above construction, we are only concerned with the leading order of $e(G_i)$ in $n$, so we were relatively wasteful about the lower-order terms. We discuss the lower order term in $\wsat(n,F)$ in greater detail in \cref{conclusion}.

\section{The Sparse Regime}\label{sparse section}

In this section we prove \cref{thm:sparse}. We begin by showing all the values described in the theorem are achievable.

\begin{lemma}
    For any $\delta \ge 2$, if $F$ is a $2$-edge-connected $\delta$-regular graph, then $\gamma_F = \delta / 2 - 1/|V(F)|$.
\end{lemma}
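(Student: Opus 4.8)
The plan is to show that the minimum in the definition of $\gamma_F$ is attained at $S = V(F)$, where a direct count yields the claimed value, while every proper nonempty subset gives a strictly larger value of $\gamma_F(S)$. Write $n = |V(F)|$. Taking $S = V(F)$, every edge of $F$ has an endpoint in $S$, so by the handshake lemma $m_F(V(F)) = e(F) = \delta n/2$, whence $\gamma_F(V(F)) = (\delta n/2 - 1)/n = \delta/2 - 1/n$. It therefore suffices to show $\gamma_F(S) \ge \delta/2$ for every $S$ with $\emptyset \subsetneq S \subsetneq V(F)$, since $\delta/2 > \delta/2 - 1/n$.

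To do this, fix such an $S$, set $s = |S|$, let $e_F(S)$ be the number of edges of $F$ with both endpoints in $S$, and let $\partial_F(S)$ be the number of edges of $F$ with exactly one endpoint in $S$. Summing degrees over the vertices of $S$ gives $\delta s = 2 e_F(S) + \partial_F(S)$, and since the edges meeting $S$ are exactly those counted by $e_F(S)$ together with those counted by $\partial_F(S)$, we obtain the identity $m_F(S) = e_F(S) + \partial_F(S) = \delta s - e_F(S)$. Now because $F$ is $2$-edge-connected and $S$ is a nonempty proper subset, its edge boundary satisfies $\partial_F(S) \ge 2$; substituting into $\delta s = 2 e_F(S) + \partial_F(S)$ yields $e_F(S) \le \delta s/2 - 1$, hence $m_F(S) \ge \delta s - (\delta s/2 - 1) = \delta s/2 + 1$. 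Therefore $\gamma_F(S) = (m_F(S) - 1)/s \ge (\delta s/2)/s = \delta/2$, and combining the two cases gives $\gamma_F = \delta/2 - 1/n$.

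Since this lemma is essentially a warm-up for the constructions in this section, I do not anticipate a genuine obstacle; the only point worth isolating is the double-counting identity $m_F(S) = \delta|S| - e_F(S)$, valid for any $\delta$-regular graph, after which the argument is immediate. The sole role of the $2$-edge-connectivity hypothesis is to guarantee $\partial_F(S) \ge 2$ for every proper nonempty $S$: this is exactly what prevents such an $S$ from behaving like a full connected component (where the edge boundary vanishes) and thereby keeps $\gamma_F(S)$ strictly above the global value $\delta/2 - 1/n$, so that the minimum is forced onto $S = V(F)$.
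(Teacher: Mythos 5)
Your proof is correct and follows essentially the same route as the paper: you compute $\gamma_F(V(F)) = \delta/2 - 1/n$ directly, and for proper nonempty $S$ you use $2$-edge-connectivity to force $\partial_F(S) \ge 2$, yielding $m_F(S) \ge \delta|S|/2 + 1$ and hence $\gamma_F(S) \ge \delta/2$. The only difference is that you spell out the double-counting identity underlying the bound $m_F(S) \ge \delta|S|/2 + 1$, which the paper states without elaboration.
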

\begin{proof}
    For every $\emptyset\subsetneq S \subsetneq V(F)$, since $F$ is $2$-edge-connected, $S$ must have at least two edges to $V(F)\sm S$, so $m_F(S) \ge \delta |S|/2 + 1$. Thus, $\gamma_F(S) \ge \delta/2 > \gamma_F(V(F)) = \delta / 2 - 1/|V(F)|$, as desired.
\end{proof}
Observe that if a $\delta$-regular graph exists on $k$ vertices, then a $2$-edge-connected one exists as well, since we can always force the graph to have a Hamilton cycle. The if direction of \cref{thm:sparse} thus follows from the above lemma and \cref{lem:f-tilde}. It remains to show the only if direction, which states that the only possible values of $w_F$ smaller than $\delta/2$ are those achieved above.

The intuition for this proof is that when $w_F$ is less than $\delta/2$, essentially the only way to form a large minimum weakly $F$-saturated graph is to repeatedly copy a sparse connected component. Such a component may not always be present in an arbitrary member $G$ of $\uwSAT(n,F)$, but as we show below, there is a ``rotation" operation we can apply to any such $G$ to obtain a $G'\in \uwSAT(n,F)$ with a sparse component. 

In the remainder of this section, $F$ is a fixed graph of minimum degree $\delta$ with $w_F < \delta/2$, and $G\in \uwSAT(n_0, F)$ is a minimum weakly $F$-saturated graph on $n_0$ vertices, where $n_0$ is very large (so that $e(G)/|V(G)|$ is very close to $w_F$).
\begin{definition}
Fix an $F$-bootstrap percolation process on $G$. Let all vertices of $G$ start out \emph{inactive}. When an edge $e\not\in E(G)$ is restored in the process, choose a new copy of $F$ formed using $e$ (ignoring any other new copies of $F$ if there are more) and let $a(e)\subseteq V(G)$ be the set of inactive vertices used in that new copy. Those vertices are \emph{activated} by $e$ and are considered \emph{active} for the rest of the process. We call $e$ the \emph{activating edge} of $a(e)$, and we call edges that activate a nonzero number of vertices \emph{activating edges}.

Observe that the sets $\{a(e): e\not\in E(G), a(e)\neq \emptyset\}$ partition $V(G)$. (Indeed, if a vertex $u$ is never activated, then $u$ is complete to the rest of $G$ and is never required for new copies of $F$ in the percolation process, contradicting that $G\in \uwSAT(n_0,F)$.) Call this partition the \emph{activation partition} of $G$, and denote it by $\A$. Say that a part $a(e)\in \A$ \emph{owns} edges of $G$ with at least one end in $a(e)$ that are used in the new copy of $F$ formed by $e$. Edges of $G$ not owned by any $a\in \A$ are called \emph{free}.
\end{definition}

The definition of ownership above has several important consequences.
\begin{prop}\label{prop:ownership} For each edge $e\in E(G)$, the following hold.
\begin{enumerate} 
\item If there is a part $a\in \A$ which owns $e$, then $a$ contains at least one end of $e$. 
\item At most one part in $\A$ owns $e$. 
\end{enumerate}
\end{prop}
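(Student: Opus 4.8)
The plan is to treat the two items in turn; item~1 is essentially a matter of unwinding the definition, and item~2 is a short timing argument. For item~1, recall that a part $a = a(e') \in \A$ is by definition declared to own only those edges of $G$ that have at least one endpoint in $a(e')$ (and that additionally appear in the chosen new copy of $F$ created when $e'$ is restored). Hence, if $a$ owns $e$, then in particular $a$ contains an endpoint of $e$. That is the whole of item~1, and I would spend one sentence on it.

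For item~2, I would argue by contradiction. Suppose $e = uv \in E(G)$ is owned by two distinct parts $a_1, a_2 \in \A$. Each part has a well-defined activating edge; moreover, since the percolation process restores edges one at a time, and the vertices activated by an earlier restored edge are active by the time any later edge is restored, two distinct restored edges yield disjoint (nonempty) parts, so the part determines its activating edge. Thus we may write $a_1 = a(e_1)$ and $a_2 = a(e_2)$ with $e_1 \neq e_2$, and we may assume $e_1$ is restored strictly before $e_2$. Next I would locate the endpoints of $e$: by item~1, each of $a_1$ and $a_2$ contains an endpoint of $e$; since the parts of $\A$ are pairwise disjoint, $e$ cannot have both endpoints in $a_1$, so exactly one endpoint, say $u$, lies in $a_1$ and the other, $v$, lies in $a_2$.

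The crux is the timing. Since $v \in a_2 = a(e_2)$ and the parts of $\A$ partition $V(G)$, the vertex $v$ is activated exactly when $e_2$ is restored, hence is inactive at every earlier stage of the process --- in particular when $e_1$ is restored. On the other hand, ownership of $e$ by $a_1$ means that $e = uv$ is one of the edges of the new copy of $F$ formed when $e_1$ is restored, so $v$ is one of the vertices of that copy. Therefore $v$ is an inactive vertex used in the copy of $F$ created by $e_1$, which by definition forces $v \in a(e_1) = a_1$. This contradicts $v \in a_2$ together with $a_1 \cap a_2 = \emptyset$, completing the proof of item~2.

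I do not anticipate a genuine obstacle. Item~1 is definitional, and item~2 comes down to handling the activation partition carefully: each vertex lies in exactly one part; a part's activating edge is unambiguous because edges are restored one at a time; and ``$v$ inactive when $e_1$ is restored'' combined with ``$v$ is a vertex of the copy of $F$ created by $e_1$'' is precisely what the definition of $a(e_1)$ requires. The only care needed is to invoke the partition property and the sequential structure of the process at exactly the right points, rather than any quantitative estimate.
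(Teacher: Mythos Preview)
Your proof is correct and follows essentially the same approach as the paper: item~1 is read off from the definition, and item~2 is the same timing argument showing that if the earlier-activated part $a(e_1)$ owned $e$, then the endpoint $v\in a(e_2)$ would have been activated by $e_1$, a contradiction. The paper frames item~2 slightly differently---it shows that for any edge with one end in each of $a(e_1)$ and $a(e_2)$ (with $e_1$ earlier), the part $a(e_1)$ cannot own it---but this is the same argument, and your use of item~1 to reduce to this configuration is made implicit there.
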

\begin{proof}
The first statement follows directly from the definition. For the second statement, suppose that an edge $e'\in E(G)$ has one end in each of $a(e_1)$ and $a(e_2)$, where $e_1$ is restored before $e_2$ in the percolation process. We claim that $e'$ cannot be owned by $a(e_1)$. Indeed, if $a(e_1)$ owns $e'$, then $e'$ is used in the new copy of $F$ formed by $e_1$, so its end in $a(e_2)$ is activated by $e_1$ and lies in $a(e_1)$ as well, a contradiction. 
\end{proof}

\begin{definition}[$\A$-matching, rotation]
Let $\hat G$ denote the graph obtained from $G$ by adding the activating edge of each part in $\A$. The graph $\hat G$ has the same vertex set, activation partition, and ownership of edges as $G$. Additionally, for $\hat G$, if an activating edge has at least one end in the part it activates, then it is owned by that part; otherwise, it is free. 

Define an \textit{$\A$-matching} to be a subset $M\subseteq E(\hat G)$ consisting of exactly one edge owned by each part of $\A$. 
A graph $G'$ is called a \emph{rotation} of $G$ if it is obtained from $\hat G$ by removing an $\A$-matching. 
\end{definition}

We remark that 
\cref{prop:ownership} holds for $\hat G$ as well as for $G$.

\begin{lemma}\label{lem:rotation}
Let $G'$ be a rotation of $G\in\uwSAT(n_0, F)$. Then, $G'\in \uwSAT(n_0, F)$.
\end{lemma}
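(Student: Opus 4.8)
The first step is a count. Passing from $G$ to $\hat G$ adds exactly $|\A|$ edges (one activating edge per part, all distinct since each restored edge activates a single set of vertices), and an $\A$-matching $M$ also has exactly $|\A|$ edges (one owned edge per part, all distinct by \cref{prop:ownership}). Hence $e(G') = e(\hat G) - |M| = e(G) = \wsat(n_0, F)$, so it will suffice to prove that $G'$ is weakly $F$-saturated; minimality, and hence $G' \in \uwSAT(n_0, F)$, then follows automatically.

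The plan is to build an $F$-bootstrap percolation process on $G'$ by editing the fixed process on $G$ step by step. Write the fixed process as $G = G_0 \subseteq G_1 \subseteq \cdots \subseteq G_N = K_n$ with $G_{j+1} = G_j + f_j$, where $f_j$ creates a chosen new copy $F_j$ of $F$; let $e_1, \dots, e_t$ be the activating edges in order of appearance, $e_i$ activating $a_i$, and $m_i \in M$ the edge owned by $a_i$. The new process $\Pi'$ will add the same edge $f_j$ whenever $f_j$ is non-activating, but at the step where the fixed process adds $e_i$ it will instead add $m_i$ (note $m_i = e_i$ is allowed, and that $e_i \in E(G')$ exactly when $m_i \ne e_i$). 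I would maintain the invariant that, after $j$ steps, the current graph $H_j$ of $\Pi'$ is $G_j$ with every not-yet-performed swap applied: for each $i$ with $e_i \notin \{f_0, \dots, f_{j-1}\}$ and $m_i \ne e_i$, delete the edge $m_i$ and add the edge $e_i$. This yields $H_0 = G'$ and $H_N = G_N = K_n$, and since $\Pi'$ has exactly $N = \binom{n_0}{2} - e(G')$ steps it genuinely reaches $K_n$; the remaining task is to check that each step introduces a new copy of $F$.

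I would do this by reusing the copies already chosen in the fixed process. When $\Pi'$ adds a non-activating $f_j$, claim it creates $F_j$; when the fixed process adds $e_i = f_j$ and $\Pi'$ instead adds $m_i$, claim it creates that same copy $F_j$. In the latter case $m_i \in E(F_j)$ follows directly from the definition of ownership (this is why $m_i$ was eligible as the $M$-edge of $a_i$), and $m_i \notin E(H_j)$ by the invariant, so the copy is genuinely new. In both cases the content of the claim is that every edge of $F_j$ other than the one $\Pi'$ just added already lies in $H_j$. Because $F_j \subseteq G_{j+1}$ while $H_j$ differs from $G_j$ only through not-yet-performed swaps, the only edges of $F_j$ that could be missing from $H_j$ are swapped-out $G$-edges $m_k$ (with $m_k \ne e_k$) whose part $a_k$ is still unprocessed at step $j$. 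But by \cref{prop:ownership} such an $m_k$ has an endpoint in $a_k$, while $V(F_j)$ consists only of vertices active at step $j$ (if $f_j$ is non-activating) or of $a_i$ together with vertices active at step $j$ (if $f_j = e_i$), and either way is disjoint from the unprocessed part $a_k$ unless $k = i$ — in which case the edge in question is $m_i$ itself, exactly the edge $\Pi'$ added. Hence $F_j$ meets no swapped-out edge other than the one re-added, all its remaining edges survive in $H_j$, and a short computation confirms $H_{j+1} = H_j + (\text{edge added})$, maintaining the invariant.

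The crux, and the only delicate point, should be this last verification: a priori a copy $F_j$ created by an activating edge could use an edge $\Pi'$ has not yet restored — an edge the fixed process restores later, or a $G$-edge swapped out of $H_j$. Both are ruled out by the observations above: $F_j \subseteq G_{j+1}$ kills the first, and the rigidity of \cref{prop:ownership} (each owned edge belongs to a unique part and meets it) pins down exactly which $G$-edges are absent from $H_j$ and shows $F_j$ avoids them. The rest — the edge count, the bookkeeping for the invariant, the length of $\Pi'$ — is routine.
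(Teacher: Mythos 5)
Your proof is correct and follows the same strategy as the paper: mirror the fixed $F$-bootstrap process on $G$ step by step, replacing each activating edge $e_i$ by the matched edge $m_i$, and verify that the same chosen copies $F_j$ still serve as witnesses. Where the paper compresses the verification into the one-line invariant ``the current graphs induce the same subgraph on the active vertex set,'' you instead track the exact symmetric difference between the two current graphs (the unprocessed swaps $m_k \leftrightarrow e_k$), which makes the case analysis — including the cases $m_i = e_i$ and $e_i$ free — fully explicit and is, if anything, a bit more careful than the paper's sketch.
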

\begin{proof}
The graphs $G$, $\hat G$, and $G'$ have the same vertex set. 
In the $F$-bootstrap percolation process on $G$, say that the parts $a_1, \ldots, a_k$ of $\A$ were activated in that order by edges $e_1, \ldots, e_k$. Label the edges of the $\A$-matching removed from $\hat G$ to form $G'$ as $e_1', \ldots, e_k'$ where part $a_i$ owns edge $e_i'$. Define an $F$-bootstrap percolation process on $G'$ 
by restoring edges in the same order as in $G$, but when an edge $e_i$ is restored in $G$, instead restore $e_i'$ in $G'$.

We see that this is a valid percolation process from the following observation, which can be proved by induction running the percolation processes for $G$ and $G'$ in synchrony. At every point in time, if $A$ is the current set of activated vertices, then the induced subgraphs $G[A]$ and $G'[A]$ are the same.

Thus $G' \in \wsat(n_0,F)$, and the fact that $G'\in \uwSAT(n_0,F)$ follows from $e(G') = e(G)$.
\end{proof}

See Figure \ref{fig:rotation example} for an example of the rotation procedure.

\tikzset{every picture/.style={line width=0.75pt}}
\begin{figure}[!htb]
\minipage[t]{0.25\textwidth}
\centering 
\begin{tikzpicture}[x=0.75pt,y=0.75pt,yscale=-1.3,xscale=1.3]
\draw   (70.27,50.13) -- (100.27,50.13) -- (121.47,69.73) -- (100.27,90.93) -- (70.67,90.53) -- (49.47,70.13) -- cycle ;
%Straight Lines [id:da41406582544611825] 
\draw    (70.27,50.13) -- (70.67,90.53) ;
%Straight Lines [id:da18929166680196574] 
\draw    (49.47,70.13) -- (100.27,90.93) ;
%Straight Lines [id:da8766055278843601] 
\draw    (49.47,70.13) -- (100.27,50.13) ;
%Straight Lines [id:da7297744089673135] 
\draw    (49.47,70.13) -- (121.47,69.73) ;
%Straight Lines [id:da22744354219371532] 
\draw    (70.27,50.13) -- (121.47,69.73) ;
%Straight Lines [id:da5796444814562114] 
\draw    (70.27,50.13) -- (100.27,90.93) ;
%Straight Lines [id:da25463963325295813] 
\draw    (100.27,50.13) -- (100.27,90.93) ;
%Straight Lines [id:da7922491455865222] 
\draw    (100.27,50.13) -- (70.67,90.53) ;
%Straight Lines [id:da927956225866504] 
\draw    (121.47,69.73) -- (70.67,90.53) ;
%Shape: Polygon [id:dp7588395289167114] 
\draw   (79.36,126.65) -- (57.43,137.57) -- (35.44,126.71) -- (43.78,109.08) -- (70.92,109.05) -- cycle ;
%Straight Lines [id:da5934299148237494] 
\draw    (35.45,126.71) -- (79.36,126.65) ;
%Straight Lines [id:da502907407639789] 
\draw    (43.78,109.08) -- (57.44,137.57) ;
%Straight Lines [id:da3385339712591442] 
\draw    (70.67,90.53) -- (70.92,109.05) ;
%Shape: Polygon [id:dp89477849741575] 
\draw   (141.2,126.59) -- (116.24,137.19) -- (91.2,126.66) -- (100.69,109.54) -- (131.59,109.5) -- cycle ;
%Straight Lines [id:da388159355528618] 
\draw    (91.2,126.66) -- (141.2,126.59) ;
%Straight Lines [id:da8256670632708052] 
\draw    (100.69,109.54) -- (116.24,137.19) ;
%Straight Lines [id:da20984858540845708] 
\draw    (100.27,92.17) -- (100.69,110.3) ;
%Straight Lines [id:da1622572072259465] 
\draw    (131.59,109.5) -- (116.24,137.19) ;
%Shape: Ellipse [id:dp04370934758558498] 
\draw  [fill={rgb, 255:red, 0; green, 0; blue, 0 }  ,fill opacity=1 ] (130.29,109.5) .. controls (130.29,109.09) and (130.87,108.75) .. (131.59,108.75) .. controls (132.31,108.75) and (132.89,109.09) .. (132.89,109.5) .. controls (132.89,109.92) and (132.31,110.26) .. (131.59,110.26) .. controls (130.87,110.26) and (130.29,109.92) .. (130.29,109.5) -- cycle ;
%Shape: Ellipse [id:dp799670744525389] 
\draw  [fill={rgb, 255:red, 0; green, 0; blue, 0 }  ,fill opacity=1 ] (139.9,126.59) .. controls (139.9,126.18) and (140.48,125.84) .. (141.2,125.84) .. controls (141.92,125.84) and (142.5,126.18) .. (142.5,126.59) .. controls (142.5,127.01) and (141.92,127.35) .. (141.2,127.35) .. controls (140.48,127.35) and (139.9,127.01) .. (139.9,126.59) -- cycle ;
%Shape: Ellipse [id:dp8608548671756229] 
\draw  [fill={rgb, 255:red, 0; green, 0; blue, 0 }  ,fill opacity=1 ] (114.94,137.19) .. controls (114.94,136.78) and (115.52,136.44) .. (116.24,136.44) .. controls (116.96,136.44) and (117.54,136.78) .. (117.54,137.19) .. controls (117.54,137.61) and (116.96,137.95) .. (116.24,137.95) .. controls (115.52,137.95) and (114.94,137.61) .. (114.94,137.19) -- cycle ;
%Shape: Ellipse [id:dp3032922926347197] 
\draw  [fill={rgb, 255:red, 0; green, 0; blue, 0 }  ,fill opacity=1 ] (89.9,126.66) .. controls (89.9,126.24) and (90.49,125.9) .. (91.2,125.9) .. controls (91.92,125.9) and (92.5,126.24) .. (92.5,126.66) .. controls (92.5,127.07) and (91.92,127.41) .. (91.2,127.41) .. controls (90.49,127.41) and (89.9,127.07) .. (89.9,126.66) -- cycle ;
%Shape: Ellipse [id:dp9678768781256486] 
\draw  [fill={rgb, 255:red, 0; green, 0; blue, 0 }  ,fill opacity=1 ] (99.39,109.54) .. controls (99.39,109.12) and (99.97,108.79) .. (100.69,108.79) .. controls (101.41,108.79) and (101.99,109.12) .. (101.99,109.54) .. controls (101.99,109.96) and (101.41,110.3) .. (100.69,110.3) .. controls (99.97,110.3) and (99.39,109.96) .. (99.39,109.54) -- cycle ;
%Shape: Ellipse [id:dp637124001718278] 
\draw  [fill={rgb, 255:red, 0; green, 0; blue, 0 }  ,fill opacity=1 ] (42.64,109.08) .. controls (42.64,108.65) and (43.15,108.3) .. (43.78,108.3) .. controls (44.41,108.3) and (44.92,108.65) .. (44.92,109.08) .. controls (44.92,109.51) and (44.41,109.86) .. (43.78,109.86) .. controls (43.15,109.86) and (42.64,109.51) .. (42.64,109.08) -- cycle ;
%Shape: Ellipse [id:dp2363384260063408] 
\draw  [fill={rgb, 255:red, 0; green, 0; blue, 0 }  ,fill opacity=1 ] (69.78,109.05) .. controls (69.78,108.61) and (70.29,108.27) .. (70.92,108.27) .. controls (71.55,108.27) and (72.06,108.61) .. (72.06,109.05) .. controls (72.06,109.48) and (71.55,109.82) .. (70.92,109.82) .. controls (70.29,109.82) and (69.78,109.48) .. (69.78,109.05) -- cycle ;
%Shape: Ellipse [id:dp9687521367383201] 
\draw  [fill={rgb, 255:red, 0; green, 0; blue, 0 }  ,fill opacity=1 ] (78.22,126.65) .. controls (78.22,126.22) and (78.73,125.87) .. (79.36,125.87) .. controls (79.99,125.87) and (80.5,126.22) .. (80.5,126.65) .. controls (80.5,127.08) and (79.99,127.43) .. (79.36,127.43) .. controls (78.73,127.43) and (78.22,127.08) .. (78.22,126.65) -- cycle ;
%Shape: Ellipse [id:dp3873647806460574] 
\draw  [fill={rgb, 255:red, 0; green, 0; blue, 0 }  ,fill opacity=1 ] (56.29,137.57) .. controls (56.29,137.14) and (56.8,136.79) .. (57.44,136.79) .. controls (58.07,136.79) and (58.58,137.14) .. (58.58,137.57) .. controls (58.58,138) and (58.07,138.35) .. (57.44,138.35) .. controls (56.8,138.35) and (56.29,138) .. (56.29,137.57) -- cycle ;
%Shape: Ellipse [id:dp218390580917184] 
\draw  [fill={rgb, 255:red, 0; green, 0; blue, 0 }  ,fill opacity=1 ] (34.3,126.71) .. controls (34.3,126.28) and (34.81,125.93) .. (35.45,125.93) .. controls (36.08,125.93) and (36.59,126.28) .. (36.59,126.71) .. controls (36.59,127.14) and (36.08,127.49) .. (35.45,127.49) .. controls (34.81,127.49) and (34.3,127.14) .. (34.3,126.71) -- cycle ;
%Shape: Circle [id:dp9555950225680843] 
\draw  [fill={rgb, 255:red, 0; green, 0; blue, 0 }  ,fill opacity=1 ] (69.43,90.53) .. controls (69.43,89.85) and (69.98,89.3) .. (70.67,89.3) .. controls (71.35,89.3) and (71.9,89.85) .. (71.9,90.53) .. controls (71.9,91.22) and (71.35,91.77) .. (70.67,91.77) .. controls (69.98,91.77) and (69.43,91.22) .. (69.43,90.53) -- cycle ;
%Shape: Circle [id:dp989587288554377] 
\draw  [fill={rgb, 255:red, 0; green, 0; blue, 0 }  ,fill opacity=1 ] (69.03,50.37) .. controls (69.03,49.69) and (69.58,49.13) .. (70.27,49.13) .. controls (70.95,49.13) and (71.5,49.69) .. (71.5,50.37) .. controls (71.5,51.05) and (70.95,51.61) .. (70.27,51.61) .. controls (69.58,51.61) and (69.03,51.05) .. (69.03,50.37) -- cycle ;
%Shape: Circle [id:dp3494605683050369] 
\draw  [fill={rgb, 255:red, 0; green, 0; blue, 0 }  ,fill opacity=1 ] (48.23,70.13) .. controls (48.23,69.45) and (48.78,68.9) .. (49.47,68.9) .. controls (50.15,68.9) and (50.7,69.45) .. (50.7,70.13) .. controls (50.7,70.82) and (50.15,71.37) .. (49.47,71.37) .. controls (48.78,71.37) and (48.23,70.82) .. (48.23,70.13) -- cycle ;
%Shape: Circle [id:dp82135885723299] 
\draw  [fill={rgb, 255:red, 0; green, 0; blue, 0 }  ,fill opacity=1 ] (120.23,69.73) .. controls (120.23,69.05) and (120.78,68.5) .. (121.47,68.5) .. controls (122.15,68.5) and (122.7,69.05) .. (122.7,69.73) .. controls (122.7,70.42) and (122.15,70.97) .. (121.47,70.97) .. controls (120.78,70.97) and (120.23,70.42) .. (120.23,69.73) -- cycle ;
%Shape: Circle [id:dp5478450613652683] 
\draw  [fill={rgb, 255:red, 0; green, 0; blue, 0 }  ,fill opacity=1 ] (99.03,50.13) .. controls (99.03,49.45) and (99.58,48.9) .. (100.27,48.9) .. controls (100.95,48.9) and (101.5,49.45) .. (101.5,50.13) .. controls (101.5,50.82) and (100.95,51.37) .. (100.27,51.37) .. controls (99.58,51.37) and (99.03,50.82) .. (99.03,50.13) -- cycle ;
%Shape: Circle [id:dp4538873944946433] 
\draw  [fill={rgb, 255:red, 0; green, 0; blue, 0 }  ,fill opacity=1 ] (99.03,90.93) .. controls (99.03,90.25) and (99.58,89.7) .. (100.27,89.7) .. controls (100.95,89.7) and (101.5,90.25) .. (101.5,90.93) .. controls (101.5,91.62) and (100.95,92.17) .. (100.27,92.17) .. controls (99.58,92.17) and (99.03,91.62) .. (99.03,90.93) -- cycle ;
\end{tikzpicture}
\caption*{A graph $F$.}
\label{fig:F7/5}
\endminipage\hfill
\minipage[t]{0.36\textwidth}
\centering
\begin{tikzpicture}[x=0.75pt,y=0.75pt,yscale=-1.5,xscale=1.2]
\draw   (75,49.33) -- (213,49.33) -- (213,78.33) -- (75,78.33) -- cycle ;
%Shape: Regular Polygon [id:dp01914847322570501] 
\draw   (92.18,121.33) -- (75.85,109.53) -- (82.03,90.36) -- (102.18,90.3) -- (108.45,109.45) -- cycle ;
%Straight Lines [id:da45427947811956004] 
\draw [color={rgb, 255:red, 254; green, 1; blue, 1 }  ,draw opacity=1 ]   (82.03,90.36) -- (92.18,121.33) ;
%Straight Lines [id:da20701506269860714] 
\draw    (75.85,109.53) -- (108.45,109.45) ;
%Straight Lines [id:da6094172216324594] 
\draw    (84.67,71.06) -- (82.03,90.36) ;
%Shape: Regular Polygon [id:dp9908019430601192] 
\draw   (135.52,121.33) -- (119.19,109.53) -- (125.36,90.36) -- (145.51,90.3) -- (151.79,109.45) -- cycle ;
%Straight Lines [id:da7666799102985986] 
\draw [color={rgb, 255:red, 254; green, 1; blue, 1 }  ,draw opacity=1 ]   (125.36,90.36) -- (135.52,121.33) ;
%Straight Lines [id:da26473565585515657] 
\draw    (119.19,109.53) -- (151.79,109.45) ;
%Straight Lines [id:da028934131339531266] 
\draw    (121.67,74.72) -- (125.36,90.36) ;
%Shape: Regular Polygon [id:dp8991393746553318] 
\draw   (199.85,121.33) -- (183.52,109.53) -- (189.7,90.36) -- (209.84,90.3) -- (216.12,109.45) -- cycle ;
%Straight Lines [id:da4462287301415995] 
\draw [color={rgb, 255:red, 254; green, 1; blue, 1 }  ,draw opacity=1 ]   (189.7,90.36) -- (199.85,121.33) ;
%Straight Lines [id:da07998651285061942] 
\draw    (183.52,109.53) -- (216.12,109.45) ;
%Straight Lines [id:da4474710322841826] 
\draw    (186,74.39) -- (189.7,90.36) ;
%Shape: Circle [id:dp8074795167371405] 
\draw  [fill={rgb, 255:red, 0; green, 0; blue, 0 }  ,fill opacity=1 ] (80.56,90.12) .. controls (80.56,89.44) and (81.11,88.88) .. (81.79,88.88) .. controls (82.48,88.88) and (83.03,89.44) .. (83.03,90.12) .. controls (83.03,90.8) and (82.48,91.36) .. (81.79,91.36) .. controls (81.11,91.36) and (80.56,90.8) .. (80.56,90.12) -- cycle ;
%Shape: Circle [id:dp8425722977200516] 
\draw  [fill={rgb, 255:red, 0; green, 0; blue, 0 }  ,fill opacity=1 ] (74.56,109.79) .. controls (74.56,109.1) and (75.11,108.55) .. (75.79,108.55) .. controls (76.48,108.55) and (77.03,109.1) .. (77.03,109.79) .. controls (77.03,110.47) and (76.48,111.02) .. (75.79,111.02) .. controls (75.11,111.02) and (74.56,110.47) .. (74.56,109.79) -- cycle ;
%Shape: Circle [id:dp9626102683933211] 
\draw  [fill={rgb, 255:red, 0; green, 0; blue, 0 }  ,fill opacity=1 ] (90.95,121.33) .. controls (90.95,120.65) and (91.5,120.1) .. (92.18,120.1) .. controls (92.87,120.1) and (93.42,120.65) .. (93.42,121.33) .. controls (93.42,122.02) and (92.87,122.57) .. (92.18,122.57) .. controls (91.5,122.57) and (90.95,122.02) .. (90.95,121.33) -- cycle ;
%Shape: Circle [id:dp3678934833558467] 
\draw  [fill={rgb, 255:red, 0; green, 0; blue, 0 }  ,fill opacity=1 ] (107.22,109.45) .. controls (107.22,108.77) and (107.77,108.21) .. (108.45,108.21) .. controls (109.14,108.21) and (109.69,108.77) .. (109.69,109.45) .. controls (109.69,110.13) and (109.14,110.68) .. (108.45,110.68) .. controls (107.77,110.68) and (107.22,110.13) .. (107.22,109.45) -- cycle ;
%Shape: Circle [id:dp879897571559495] 
\draw  [fill={rgb, 255:red, 0; green, 0; blue, 0 }  ,fill opacity=1 ] (100.94,90.54) .. controls (100.94,89.86) and (101.49,89.3) .. (102.18,89.3) .. controls (102.86,89.3) and (103.41,89.86) .. (103.41,90.54) .. controls (103.41,91.22) and (102.86,91.77) .. (102.18,91.77) .. controls (101.49,91.77) and (100.94,91.22) .. (100.94,90.54) -- cycle ;
%Shape: Circle [id:dp71645097646005] 
\draw  [fill={rgb, 255:red, 0; green, 0; blue, 0 }  ,fill opacity=1 ] (124.13,90.36) .. controls (124.13,89.67) and (124.68,89.12) .. (125.36,89.12) .. controls (126.04,89.12) and (126.6,89.67) .. (126.6,90.36) .. controls (126.6,91.04) and (126.04,91.59) .. (125.36,91.59) .. controls (124.68,91.59) and (124.13,91.04) .. (124.13,90.36) -- cycle ;
%Shape: Circle [id:dp62072824451853] 
\draw  [fill={rgb, 255:red, 0; green, 0; blue, 0 }  ,fill opacity=1 ] (117.95,109.53) .. controls (117.95,108.85) and (118.5,108.3) .. (119.19,108.3) .. controls (119.87,108.3) and (120.42,108.85) .. (120.42,109.53) .. controls (120.42,110.22) and (119.87,110.77) .. (119.19,110.77) .. controls (118.5,110.77) and (117.95,110.22) .. (117.95,109.53) -- cycle ;
%Shape: Circle [id:dp813216852050185] 
\draw  [fill={rgb, 255:red, 0; green, 0; blue, 0 }  ,fill opacity=1 ] (134.28,121.33) .. controls (134.28,120.65) and (134.83,120.1) .. (135.52,120.1) .. controls (136.2,120.1) and (136.75,120.65) .. (136.75,121.33) .. controls (136.75,122.02) and (136.2,122.57) .. (135.52,122.57) .. controls (134.83,122.57) and (134.28,122.02) .. (134.28,121.33) -- cycle ;
%Shape: Circle [id:dp5875538303804904] 
\draw  [fill={rgb, 255:red, 0; green, 0; blue, 0 }  ,fill opacity=1 ] (150.55,109.45) .. controls (150.55,108.77) and (151.1,108.21) .. (151.79,108.21) .. controls (152.47,108.21) and (153.02,108.77) .. (153.02,109.45) .. controls (153.02,110.13) and (152.47,110.68) .. (151.79,110.68) .. controls (151.1,110.68) and (150.55,110.13) .. (150.55,109.45) -- cycle ;
%Shape: Circle [id:dp844692991956357] 
\draw  [fill={rgb, 255:red, 0; green, 0; blue, 0 }  ,fill opacity=1 ] (144.27,90.3) .. controls (144.27,89.62) and (144.83,89.07) .. (145.51,89.07) .. controls (146.19,89.07) and (146.75,89.62) .. (146.75,90.3) .. controls (146.75,90.99) and (146.19,91.54) .. (145.51,91.54) .. controls (144.83,91.54) and (144.27,90.99) .. (144.27,90.3) -- cycle ;
%Shape: Circle [id:dp2189154643716833] 
\draw  [fill={rgb, 255:red, 0; green, 0; blue, 0 }  ,fill opacity=1 ] (182.28,109.53) .. controls (182.28,108.85) and (182.84,108.3) .. (183.52,108.3) .. controls (184.2,108.3) and (184.76,108.85) .. (184.76,109.53) .. controls (184.76,110.22) and (184.2,110.77) .. (183.52,110.77) .. controls (182.84,110.77) and (182.28,110.22) .. (182.28,109.53) -- cycle ;
%Shape: Circle [id:dp7932994783786038] 
\draw  [fill={rgb, 255:red, 0; green, 0; blue, 0 }  ,fill opacity=1 ] (198.61,121.33) .. controls (198.61,120.65) and (199.17,120.1) .. (199.85,120.1) .. controls (200.53,120.1) and (201.09,120.65) .. (201.09,121.33) .. controls (201.09,122.02) and (200.53,122.57) .. (199.85,122.57) .. controls (199.17,122.57) and (198.61,122.02) .. (198.61,121.33) -- cycle ;
%Shape: Circle [id:dp44800927027420756] 
\draw  [fill={rgb, 255:red, 0; green, 0; blue, 0 }  ,fill opacity=1 ] (214.88,109.45) .. controls (214.88,108.77) and (215.44,108.21) .. (216.12,108.21) .. controls (216.8,108.21) and (217.36,108.77) .. (217.36,109.45) .. controls (217.36,110.13) and (216.8,110.68) .. (216.12,110.68) .. controls (215.44,110.68) and (214.88,110.13) .. (214.88,109.45) -- cycle ;
%Shape: Circle [id:dp8915632885124618] 
\draw  [fill={rgb, 255:red, 0; green, 0; blue, 0 }  ,fill opacity=1 ] (188.46,90.36) .. controls (188.46,89.67) and (189.01,89.12) .. (189.7,89.12) .. controls (190.38,89.12) and (190.93,89.67) .. (190.93,90.36) .. controls (190.93,91.04) and (190.38,91.59) .. (189.7,91.59) .. controls (189.01,91.59) and (188.46,91.04) .. (188.46,90.36) -- cycle ;
%Shape: Circle [id:dp2738096141253519] 
\draw  [fill={rgb, 255:red, 0; green, 0; blue, 0 }  ,fill opacity=1 ] (208.61,90.3) .. controls (208.61,89.62) and (209.16,89.07) .. (209.84,89.07) .. controls (210.53,89.07) and (211.08,89.62) .. (211.08,90.3) .. controls (211.08,90.99) and (210.53,91.54) .. (209.84,91.54) .. controls (209.16,91.54) and (208.61,90.99) .. (208.61,90.3) -- cycle ;
\draw (130,58.4) node [anchor=north west][inner sep=0.75pt]  [font=\large]  {$K_{11}$};
% Text Node
\draw (160.33,106.91) node [anchor=north west][inner sep=0.75pt] [font=\Large]   {$...$};
\end{tikzpicture}
\caption*{A weakly $F$-saturated graph $G$, plus activating edges (shown in red).}
\label{fig:G7/5}
\endminipage\hfill
\minipage[t]{0.35\textwidth}
\centering
\begin{tikzpicture}[x=0.75pt,y=0.75pt,yscale=-1.5,xscale=1.2]
\draw   (95,69.33) -- (233,69.33) -- (233,98.33) -- (95,98.33) -- cycle ;
%Shape: Regular Polygon [id:dp8008100970958651] 
\draw   (112.18,141.33) -- (95.85,129.53) -- (102.03,110.36) -- (122.18,110.3) -- (128.45,129.45) -- cycle ;
%Straight Lines [id:da3326387439376659] 
\draw [color={rgb, 255:red, 0; green, 0; blue, 0 }  ,draw opacity=1 ]   (102.03,110.36) -- (112.18,141.33) ;
%Straight Lines [id:da4739752865572071] 
\draw    (95.85,129.53) -- (128.45,129.45) ;
%Shape: Regular Polygon [id:dp13553366224729668] 
\draw   (155.52,141.33) -- (139.19,129.53) -- (145.36,110.36) -- (165.51,110.3) -- (171.79,129.45) -- cycle ;
%Straight Lines [id:da9002371264183981] 
\draw [color={rgb, 255:red, 0; green, 0; blue, 0 }  ,draw opacity=1 ]   (145.36,110.36) -- (155.52,141.33) ;
%Straight Lines [id:da6605728184159412] 
\draw    (139.19,129.53) -- (171.79,129.45) ;
%Shape: Regular Polygon [id:dp039385962174214306] 
\draw   (219.85,141.33) -- (203.52,129.53) -- (209.7,110.36) -- (229.84,110.3) -- (236.12,129.45) -- cycle ;
%Straight Lines [id:da46040460164083874] 
\draw [color={rgb, 255:red, 0; green, 0; blue, 0 }  ,draw opacity=1 ]   (209.7,110.36) -- (219.85,141.33) ;
%Straight Lines [id:da47139180050951013] 
\draw    (203.52,129.53) -- (236.12,129.45) ;
%Shape: Circle [id:dp07997569027058149] 
\draw  [fill={rgb, 255:red, 0; green, 0; blue, 0 }  ,fill opacity=1 ] (100.56,110.12) .. controls (100.56,109.44) and (101.11,108.88) .. (101.79,108.88) .. controls (102.48,108.88) and (103.03,109.44) .. (103.03,110.12) .. controls (103.03,110.8) and (102.48,111.36) .. (101.79,111.36) .. controls (101.11,111.36) and (100.56,110.8) .. (100.56,110.12) -- cycle ;
%Shape: Circle [id:dp1785852769628693] 
\draw  [fill={rgb, 255:red, 0; green, 0; blue, 0 }  ,fill opacity=1 ] (94.56,129.79) .. controls (94.56,129.1) and (95.11,128.55) .. (95.79,128.55) .. controls (96.48,128.55) and (97.03,129.1) .. (97.03,129.79) .. controls (97.03,130.47) and (96.48,131.02) .. (95.79,131.02) .. controls (95.11,131.02) and (94.56,130.47) .. (94.56,129.79) -- cycle ;
%Shape: Circle [id:dp17726374635775533] 
\draw  [fill={rgb, 255:red, 0; green, 0; blue, 0 }  ,fill opacity=1 ] (110.95,141.33) .. controls (110.95,140.65) and (111.5,140.1) .. (112.18,140.1) .. controls (112.87,140.1) and (113.42,140.65) .. (113.42,141.33) .. controls (113.42,142.02) and (112.87,142.57) .. (112.18,142.57) .. controls (111.5,142.57) and (110.95,142.02) .. (110.95,141.33) -- cycle ;
%Shape: Circle [id:dp6947279800940216] 
\draw  [fill={rgb, 255:red, 0; green, 0; blue, 0 }  ,fill opacity=1 ] (127.22,129.45) .. controls (127.22,128.77) and (127.77,128.21) .. (128.45,128.21) .. controls (129.14,128.21) and (129.69,128.77) .. (129.69,129.45) .. controls (129.69,130.13) and (129.14,130.68) .. (128.45,130.68) .. controls (127.77,130.68) and (127.22,130.13) .. (127.22,129.45) -- cycle ;
%Shape: Circle [id:dp23701544109983153] 
\draw  [fill={rgb, 255:red, 0; green, 0; blue, 0 }  ,fill opacity=1 ] (120.94,110.54) .. controls (120.94,109.86) and (121.49,109.3) .. (122.18,109.3) .. controls (122.86,109.3) and (123.41,109.86) .. (123.41,110.54) .. controls (123.41,111.22) and (122.86,111.77) .. (122.18,111.77) .. controls (121.49,111.77) and (120.94,111.22) .. (120.94,110.54) -- cycle ;
%Shape: Circle [id:dp8311499774913651] 
\draw  [fill={rgb, 255:red, 0; green, 0; blue, 0 }  ,fill opacity=1 ] (144.13,110.36) .. controls (144.13,109.67) and (144.68,109.12) .. (145.36,109.12) .. controls (146.04,109.12) and (146.6,109.67) .. (146.6,110.36) .. controls (146.6,111.04) and (146.04,111.59) .. (145.36,111.59) .. controls (144.68,111.59) and (144.13,111.04) .. (144.13,110.36) -- cycle ;
%Shape: Circle [id:dp6831496090338012] 
\draw  [fill={rgb, 255:red, 0; green, 0; blue, 0 }  ,fill opacity=1 ] (137.95,129.53) .. controls (137.95,128.85) and (138.5,128.3) .. (139.19,128.3) .. controls (139.87,128.3) and (140.42,128.85) .. (140.42,129.53) .. controls (140.42,130.22) and (139.87,130.77) .. (139.19,130.77) .. controls (138.5,130.77) and (137.95,130.22) .. (137.95,129.53) -- cycle ;
%Shape: Circle [id:dp9216516697010357] 
\draw  [fill={rgb, 255:red, 0; green, 0; blue, 0 }  ,fill opacity=1 ] (154.28,141.33) .. controls (154.28,140.65) and (154.83,140.1) .. (155.52,140.1) .. controls (156.2,140.1) and (156.75,140.65) .. (156.75,141.33) .. controls (156.75,142.02) and (156.2,142.57) .. (155.52,142.57) .. controls (154.83,142.57) and (154.28,142.02) .. (154.28,141.33) -- cycle ;
%Shape: Circle [id:dp02240613134918301] 
\draw  [fill={rgb, 255:red, 0; green, 0; blue, 0 }  ,fill opacity=1 ] (170.55,129.45) .. controls (170.55,128.77) and (171.1,128.21) .. (171.79,128.21) .. controls (172.47,128.21) and (173.02,128.77) .. (173.02,129.45) .. controls (173.02,130.13) and (172.47,130.68) .. (171.79,130.68) .. controls (171.1,130.68) and (170.55,130.13) .. (170.55,129.45) -- cycle ;
%Shape: Circle [id:dp37495275665723593] 
\draw  [fill={rgb, 255:red, 0; green, 0; blue, 0 }  ,fill opacity=1 ] (164.27,110.3) .. controls (164.27,109.62) and (164.83,109.07) .. (165.51,109.07) .. controls (166.19,109.07) and (166.75,109.62) .. (166.75,110.3) .. controls (166.75,110.99) and (166.19,111.54) .. (165.51,111.54) .. controls (164.83,111.54) and (164.27,110.99) .. (164.27,110.3) -- cycle ;
%Shape: Circle [id:dp7877176496685929] 
\draw  [fill={rgb, 255:red, 0; green, 0; blue, 0 }  ,fill opacity=1 ] (202.28,129.53) .. controls (202.28,128.85) and (202.84,128.3) .. (203.52,128.3) .. controls (204.2,128.3) and (204.76,128.85) .. (204.76,129.53) .. controls (204.76,130.22) and (204.2,130.77) .. (203.52,130.77) .. controls (202.84,130.77) and (202.28,130.22) .. (202.28,129.53) -- cycle ;
%Shape: Circle [id:dp8330548980158037] 
\draw  [fill={rgb, 255:red, 0; green, 0; blue, 0 }  ,fill opacity=1 ] (218.61,141.33) .. controls (218.61,140.65) and (219.17,140.1) .. (219.85,140.1) .. controls (220.53,140.1) and (221.09,140.65) .. (221.09,141.33) .. controls (221.09,142.02) and (220.53,142.57) .. (219.85,142.57) .. controls (219.17,142.57) and (218.61,142.02) .. (218.61,141.33) -- cycle ;
%Shape: Circle [id:dp3160861644130333] 
\draw  [fill={rgb, 255:red, 0; green, 0; blue, 0 }  ,fill opacity=1 ] (234.88,129.45) .. controls (234.88,128.77) and (235.44,128.21) .. (236.12,128.21) .. controls (236.8,128.21) and (237.36,128.77) .. (237.36,129.45) .. controls (237.36,130.13) and (236.8,130.68) .. (236.12,130.68) .. controls (235.44,130.68) and (234.88,130.13) .. (234.88,129.45) -- cycle ;
%Shape: Circle [id:dp968285017781142] 
\draw  [fill={rgb, 255:red, 0; green, 0; blue, 0 }  ,fill opacity=1 ] (208.46,110.36) .. controls (208.46,109.67) and (209.01,109.12) .. (209.7,109.12) .. controls (210.38,109.12) and (210.93,109.67) .. (210.93,110.36) .. controls (210.93,111.04) and (210.38,111.59) .. (209.7,111.59) .. controls (209.01,111.59) and (208.46,111.04) .. (208.46,110.36) -- cycle ;
%Shape: Circle [id:dp6235128904597427] 
\draw  [fill={rgb, 255:red, 0; green, 0; blue, 0 }  ,fill opacity=1 ] (228.61,110.3) .. controls (228.61,109.62) and (229.16,109.07) .. (229.84,109.07) .. controls (230.53,109.07) and (231.08,109.62) .. (231.08,110.3) .. controls (231.08,110.99) and (230.53,111.54) .. (229.84,111.54) .. controls (229.16,111.54) and (228.61,110.99) .. (228.61,110.3) -- cycle ;

% Text Node
\draw (150,78.4) node [anchor=north west][inner sep=0.75pt] [font=\large]   {$K_{11}$};
% Text Node
\draw (179.33,126.91) node [anchor=north west][inner sep=0.75pt]  [font=\Large]  {$...$};
\end{tikzpicture}
\caption*{The graph $G'$, a rotation of $G$.}
\label{fig:G'7/5}
\endminipage
\caption{Rotation. The graph $F$ to the left has $w_F = 7/5$~\cite[Section 4]{tz23}. The next graph $G$ is weakly $F$-saturated and attains $e(G) = \frac75|V(G)| + O(1)$; its activating edges are included in $\hat G$ but not in $G$. In the final rotation $G'$, we replace the outgoing edge of each lower component with the activating edge. }
\label{fig:rotation example}
\end{figure}
The next definition applies to any partition of $V(G)$, not necessarily the activation partition.
\begin{definition}[Density]
Let $\P$ be any partition of $V(G)$ and suppose each edge of $G$ is owned by exactly one part in $\P$. Denote $|p|$ to be the number of vertices in the part $p\in \P$ and $m^*(p)$ to be the number of edges owned by $p$. Then, the \emph{density} of $p$ is given by $d(p) = m^*(p)/|p|$.
\end{definition}
We emphasize that to talk about the density of a part in a partition $\P$, we require that ownership is defined for \emph{all} edges of $G$.

For use in the proof of our main lemma (\cref{lem:goodpartition} below), we now introduce the following notion.

\begin{definition}\label{def:out-connected}
Recall that a \emph{mixed multigraph} $M$ is one with both directed and undirected edges, and its \emph{underlying undirected graph} is the graph on the same vertex set obtained by placing an edge between two vertices if and only if there is at least one edge between them in $M$ (undirected or directed in either direction).

Let $M$ be a mixed multigraph with a distinguished vertex $v$ such that there are no out-edges from $v$ and no undirected edges incident to $v$. Say that $M$ is \emph{out-connected} if after deleting any one out-edge from each vertex of $M$, any two vertices other than $v$ are still connected in the underlying undirected graph.
\end{definition}

\begin{prop}\label{prop:mixed multigraphs}
If a mixed multigraph $M$ with distinguished vertex $v$ is out-connected and has $d_0$ directed edges not to $v$, $d_1$ directed edges to $v$, $e$ undirected edges, and $s$ vertices besides $v$, then at least one of the following inequalities hold: $\frac12 d_0 + e \geq s-1$, or $\frac12 (d_0 + d_1) + e \geq s$.
\end{prop}
\begin{proof}
We proceed by induction on $s$. The base case $s=1$ is trivial, so assume $s>1$ and that the statement is true for smaller $s$. For each vertex $m\in V(M)$ except for $v$, either (i) an undirected edge has $m$ as one end and some $m'\neq m$ as the other, (ii) there are two out-edges from $m$ (possibly to $v$), or else (iii) there are two out-edges from some vertex $m'\neq m$ to $m$. Indeed, if none of these occur, then by deleting an out-edge from each vertex of $M$, we can disconnect $m$ from the rest of the graph: we simply delete the (at most one) out-edge from $m$ and the (at most one) out-edge from each $m'\neq m$ to $m$. 

First suppose case (i) or (iii) holds for some pair $m, m'$. Note that by assumption, neither $m$ nor $m'$ is $v$. Form a mixed multigraph $\tilde M$ by contracting the undirected edge or the two directed edges (in case (i) or (iii) respectively). That is, remove all edges between $m$ and $m'$ and replace those two vertices by a single vertex $\tilde m$; edges with an end in $m$ or $m'$ now have an end in $\tilde m$, with directions inherited from their directions in $M$. No other edges are destroyed by contraction as we allow parallel edges. Our assumptions on $v$ clearly hold for $\tilde M$. Now $\tilde M$ is out-connected since $M$ is: if by removing an out-edge from each vertex of $\tilde M$ we could disconnect two non-$v$ vertices of $\tilde M$, then removing the corresponding edges of $M$ disconnects the corresponding non-$v$ vertices of $M$, a contradiction. Say $\tilde M$ has $d_0'$ directed edges not to $v$, $d_1'$ directed edges to $v$, and $e'$ undirected edges. The induction hypothesis applied to $\tilde M$ tells us that either $\frac12 d_0' + e' \geq (s-1)-1$ or $\frac12(d_0'+d_1') + e' \geq s-1$. Uncontracting the undirected edge or two directed edges, we have $e = e'+1$ or $d_0 = d_0' + 2$; in either case, $\frac12 d_0 + e = \frac12 d_0' + e' + 1$, and furthermore, $d_1 = d_1'$. Thus either $\frac12d_0 + e \geq s-1$ or $\frac12(d_0+d_1) + e \geq s$, as desired.

Thus we may assume cases (i) and (iii) do not hold for any pair $m, m'$, so in particular $e = 0$. We now show that $d_0 + d_1 \geq 2s$ (the second inequality). If not, then since $d_0 + d_1$ counts the total out-degree of all vertices of $M$, some vertex $m\neq v$ of $M$ has at most one out-edge. But then none of (i), (ii), or (iii) hold for $m$, a contradiction which finishes the proof.
\end{proof}

The next lemma contains the bulk of the proof. It essentially says that we can find a partition of $V(G)$ whose least dense part can be disconnected from its complement via rotation.

\begin{lemma}\label{lem:goodpartition}
Let $k>\delta$ be fixed, let $F$ be a graph with $w_F < \frac\delta2 - \frac1{k+1}$, and let $n_0 \gg_k 1$.\footnote{The notation $n_0\gg_k 1$ means that $n_0$ is a sufficiently large constant, where this constant is allowed to depend on $k$.} Let $G\in \uwSAT(n_0, F)$ and let $\A$ be the activation partition of $G$. Then, there exists a partition $\P$ of $V(G)$ and an ownership assignment of $E(G)$ to parts of $\P$ with the following properties.
\begin{enumerate}[(1)]
\item $\P$ is a coarsening of $\A$, and edges owned by a part $a\in \A$ are owned by the part $p\in \P$ containing $a$.
\item Every edge of $G$ is owned by some $p\in \P$, and each edge of $G$ is incident to the part of $\P$ that owns it.
\item The least dense $p_0\in \P$ has $d(p_0) = \frac\delta2 - \frac1{k'}$, where $k'\in\{|p_0|,2|p_0|\}$, $k'$ is even if $\delta$ is odd, and $k' \leq k$.
\item There is a rotation $G'\in \uwSAT(n_0,F)$ of $G$ for which $p_0$ is a connected component of $G'$.
\end{enumerate}
\end{lemma}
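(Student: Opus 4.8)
The plan is to build the partition $\P$ by repeatedly coarsening the activation partition $\A$: first show that every part of $\A$ is forced to own nearly $\tfrac{\delta}{2}|a|$ edges, then use a global edge count to produce many ``extremal'' parts of rigid local structure, and finally merge parts and reassign ownership of the leftover (free or awkwardly owned) edges so as to land on one part $p_0$ of exactly the prescribed density that can be separated off by a rotation. \textbf{A rigidity lemma for ownership.} I would first prove that if $a=a(e)\in\A$ is activated by $e$, then every edge of the chosen new copy $C$ of $F$ incident to $a$, other than $e$ itself, lies in $E(G)$ and is owned by $a$. This is an induction on the activation order: such an edge at $v\in a$, had it been restored at an earlier step, would have put $v$ into that step's inactive set, making $v$ already active --- a contradiction. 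Since each vertex of $a$ has degree $\ge\delta$ in $C\cong F$ and at most one incident edge of $C$ (namely $e$) is missing from $G$, this gives $m^*(a)\ge\lceil\delta|a|/2\rceil-1$, i.e.\ $d(a)\ge\tfrac\delta2-\tfrac1{|a|}$; and when this is essentially tight, $a$ is a union of connected components of $C$ and every edge it owns lies inside $a$.

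\textbf{Counting.} Reassign every free edge of $G$ to an $\A$-part containing one of its ends (retaining the choice of end for later), so that $e(G)=\sum_a m^*(a)$ with every edge owned and incident to its owner. Call $a$ \emph{extremal} if $m^*(a)=\lceil\delta|a|/2\rceil-1$; every other part then has density $\ge\delta/2$. Comparing $e(G)=w_Fn_0+o(n_0)$ with $\sum_a m^*(a)\ge\tfrac\delta2 n_0-\#\{\text{extremal parts}\}$ and using $w_F<\tfrac\delta2-\tfrac1{k+1}$ forces linearly many extremal parts, each of size $\le k$. Moreover the minimum-density part of $\A$ (with respect to the original ownership, before reassigning free edges) automatically has $m^*(a)=\lceil\delta|a|/2\rceil-1$, hence density $\tfrac\delta2-\tfrac1{k'}$ with $k'\in\{|a|,2|a|\}$, $k'$ even if $\delta$ is odd, and $k'\le k$ --- the case $k'=2|a|$ forces $\delta$ and $|a|$ odd and, since the density is then $\tfrac\delta2-\tfrac1{2|a|}<\tfrac\delta2-\tfrac1{k+1}$, already yields $2|a|\le k$.

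\textbf{The coarsening and the rotation.} By the rigidity lemma an extremal part owns only edges inside itself, so the sole obstruction to cutting a candidate $p_0$ off by a rotation is an edge of $\hat G$ leaving $p_0$ that is free or shares an owner with a second leaving edge (an $\A$-matching uses one edge per part). I would run a cleaning procedure on $\P$, initialized at $\A$ with $p_0$ an extremal part: merge into $p_0$ any extremal part joined to it by exactly one such edge --- merging two extremal parts across a single edge gives density $\tfrac\delta2-\tfrac1{|p_0\cup q|}$, again of the prescribed form, and iterating this along single edges preserves extremality up to a mild parity check --- and deal with free leaving edges by reassignment or by such a merge, choosing the initial extremal part among the linearly many available so that the procedure terminates with $p_0$ extremal, of size $\le k$, and with every edge of $\hat G$ leaving $p_0$ owned by a distinct part and non-free. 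Then take $M$ to be the $\A$-matching that picks, for each such owner, its unique leaving edge and, for every other part, an arbitrary owned edge; \cref{lem:rotation} gives $G'=\hat G\setminus M\in\uwSAT(n_0,F)$, and no $G'$-edge leaves $p_0$, so $p_0$ is a connected component of $G'$ (taking $p_0$ connected in $\hat G$).

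\textbf{Main obstacle.} The delicate step is the last one: pinning the density of $p_0$ to exactly $\tfrac\delta2-\tfrac1{k'}$ with $k'\le k$ while simultaneously making $p_0$ cleanly separable. Both reassigning free edges and merging tend to raise the density, so the real content is to show that non-extremal parts and free edges can only attach to a well-chosen candidate $p_0$ in the mild ways the cleaning procedure absorbs --- the quantitative form of the heuristic that a large minimum weakly $F$-saturated graph is essentially a sparse connected component copied many times (cf.\ \cref{fig:rotation example}). The linear abundance of extremal parts and their rigidity are precisely what make this choice possible.
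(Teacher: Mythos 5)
Your proposal takes a genuinely different route from the paper. The paper defines the coarsening $\P$ in one shot as the pullback of the rotation-connected components of the contracted multigraph $\hat G_\A$, then analyzes the least dense part $p_0$ of that partition: a global counting argument gives $m^*(p_0) < \tfrac\delta2|p_0|$, and a helper claim about ``out-connected'' mixed multigraphs (proved by contraction-induction) shows $\tfrac12 x_0 + y \ge s-1$, which together pin down $d(p_0)$, force $x_1\le 1$, and make the rotation work. Your plan instead builds $p_0$ locally, by starting from one extremal $\A$-part and repeatedly merging in neighbors via an unspecified ``cleaning procedure.''

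The gap is exactly where you flag it. The cleaning procedure is not merely incomplete; as sketched it is likely to fail. For instance, if $\delta$ and both $|a_1|,|a_2|$ are odd and $a_1,a_2$ are extremal with $m^*(a_i)=(\delta|a_i|-1)/2$, then merging them across a single free edge gives $m^*(a_1\cup a_2)=\tfrac\delta2|a_1\cup a_2|$, i.e.\ density exactly $\delta/2$ --- which is not of the required form $\delta/2-1/k'$, and is not $<\delta/2-1/(k+1)$, so the merged part is not a valid candidate for $p_0$. You would then need to argue that such mergers can be avoided by choosing the initial extremal part well, but nothing in the proposal controls how free edges and non-extremal parts attach to any given candidate. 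The paper's solution is structural: rotation-connectedness is an equivalence relation, so $\P$ is forced, and the fact that different rotation components can always be disconnected by an $\A$-matching is built into the definition rather than achieved by an adaptive merging process. The quantitative heart --- bounding $\tfrac12 x_0 + y$ from below by $s-1$ over all of $p_0$ simultaneously, via the out-connected multigraph induction --- has no analogue in your proposal, and this is precisely the estimate that both bounds the density of $p_0$ and guarantees at most one owned edge leaves $p_0$ in $\hat G$.

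Two smaller issues. Your rigidity lemma asserts that when $m^*(a)$ is tight, ``every edge it owns lies inside $a$,'' which is fine, but you then treat extremal parts as if they were nearly separable; in fact a free edge may leave an extremal part without being owned by it, and these are precisely the edges the paper's argument must control. And your claim that the minimum-density part of $\A$ ``automatically'' attains $m^*(a)=\lceil\delta|a|/2\rceil-1$ needs the counting step $\min_a d(a) < \delta/2 - 1/(k+1)$ first; you do have the ingredients for that, but it should be stated in the right order.
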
 

\begin{proof}

Recall that $\hat G$ is the graph $G$ with activating edges added, and each new edge is owned by its respective part in $\A$ if and only if it has an end in the part (otherwise it is free). Let $\hat G_{\A}$ be the multigraph (allowing parallel edges and self-loops) obtained from $\hat G$ by contracting each part $a\in \A$ to one vertex labeled by $a$; we keep all edges of $\hat G$ and remember which edges are owned by which parts and which are free. An $\A$-matching in $\hat G_\A$ is the image of an $\A$-matching in $\hat G$ under contraction. Define a set $U\subseteq V(\hat G_\A)$ to be \emph{rotation-connected} if for any $\A$-matching $M$ in $\hat G_\A$, every pair of vertices in $U$ is connected in $\hat G_\A - M$ (by a path not necessarily in $U$). Naturally, we call maximal rotation-connected subgraphs of $\hat G_\A$ \textit{rotation components}. Observe that the property that a pair of vertices $u$ and $v$ are connected in every $\hat G_\A - M$ is an equivalence relation, so each vertex lies in a unique rotation component.

The rotation components thus form a partition $P_\A$ of $\hat G_\A$. Define $\P$ by pulling back $P_\A$ through the contraction map: each part $p\in \P$ consists of the vertices of $G$ that map to a given rotation component in $\P_\A$.

We next define ownership of edges in $\hat G$ for $\P$. 
Each part $p$ of $\P$ owns all edges of $\hat G$ owned by its subparts in the activation partition $\A$, as well as any previously free edges in $\hat G$ with both ends in $p$ (since $E(G) \subseteq E(\hat G)$, ownership of edges of $G$ is inherited.) Property 1 of the lemma follows.

Since free edges are never in an $\A$-matching, two parts in  $\A$ with a free edge between them are necessarily in the same part of $\P$, which then owns that edge. Free edges with both ends in a single part $a\in \A$ become owned by the part in $\P$ containing $a$. \cref{prop:ownership} says that all edges are owned by at most one part of $\A$; hence, all edges of $\hat G$ are owned by exactly one part of $\P$. Furthermore, \cref{prop:ownership} says that edges of $G$ owned by some part of $\A$ have at least one end in the part. This establishes property 2 of the lemma. 

We now focus on $p_0$, the part of $\P$ with the lowest density, and prove properties 3 and 4 of the lemma. 
Applying \cref{lem:rotation}, let $G'$ be a weakly $F$-saturated graph which is a rotation of $G$, formed by removing an $\A$-matching $M$ from $\hat G$. When choosing which edges form $M$, whenever possible, choose edges with exactly one end in the distinguished part $p_0$.

To prove property 4, first consider edges in $\hat G$ with one end in $p_0$ which are owned by a different part $p$. 
When forming $M$, we include edges with one end in $p_0$ whenever possible. Thus, all edges owned by $p$ whose other ends are in $p_0$ are included in $M$ and thus removed to form $G'$; indeed, if we could not include all such edges in $M$, then $p$ and $p_0$ would be rotation-connected in $\hat G_\A$, a contradiction. Thus it remains to show that edges in $G'$ owned by $p_0$ have both ends in $p_0$.

We claim that $d(p_0) < \delta/2 - 1/(k+1)$. Recall that every edge in $G$ is owned by exactly one part in $\P$; hence, $\sum_{p\in\P} m^*(p)$ counts the number of edges in $G$. Suppose $d(p_0) \geq \delta/2 - 1/(k+1)$; since $p_0$ has the lowest density among parts in $\P$, we have
\begin{align*}
e(G) = \sum_{p\in \P} m^*(p)  = \sum_{p\in \P} d(p) |p| \geq \sum_{p\in \P} \left(\frac\delta2 - \frac{1}{k+1}\right) |p| = \left(\frac\delta2-\frac{1}{k+1}\right) n_0.
\end{align*}
But $G\in\uwSAT(n_0,F)$ and $w_F < \delta/2 - 1/(k+1)$, so since $n_0$ is large, this is a contradiction, and the claim is proved. This claim implies in particular that \begin{align}\label{m* upper bound} m^*(p_0) < \left(\frac{\delta}{2} - \frac{1}{k+1}\right)|p_0| < \frac{\delta}{2}|p_0|.
\end{align}

We next prove a lower bound on $m^*(p_0)$. 
Say that $p_0$ consists of $s$ parts $a_1, \ldots, a_s$ from $\mathcal A$. In the $F$-bootstrap percolation process on $G$, immediately after $a_i$ is activated, $a_i$ must own at least $\delta$ edges incident to each vertex of $a_i$ in that new copy of $F$ (where we mean ownership in $\hat G$). In particular, recall that besides the edge that activated $a_i$, edges that are present at this point in the process but not in the base graph $G$ cannot be owned by $a_i$, for otherwise some vertices of $a_i$ would have been activated sooner. Thus, $a_i$ owns at least $\frac{\delta}{2}|a_i| + \frac{1}{2}x(a_i)$ edges in $\hat G$, where $x(a_i)$ is the number of edges $a_i$ owns with only one end in $a_i$. We use the following notation:

\begin{itemize}
    \item $x_0$ is the number of edges in $\hat G$ owned by some $a_i$ with other end in $p_0$ but not in $a_i$;
    \item $x_1$ is the number of edges in $\hat G$ owned by some $a_i$ with other end outside of $p_0$;
    \item $y$ is the number of edges in $\hat G$ with both ends in $p_0$ which are not owned by any $a_i$ (that is, which were free before $p_0$ took ownership).
\end{itemize} 

Recall that each part $a\in \A$ owns at most one more edge in $\hat G$ than in $G$ (its activating edge, if it has at least one end in $a$). Thus, each $a_i$ owns at least $\frac\delta2 |a_i| + \frac12x(a_i) - 1$ edges in $G$. Summing this over $i$ and counting the previously free edges, we obtain
\begin{equation}\label{m* lower bound 1}
m^*(p_0) \geq \frac\delta2 |p_0| + \frac{1}{2} (x_0 + x_1) - s +y.
\end{equation}

\begin{claim*}
We have $\frac12 x_0 + y \geq s-1$.\end{claim*}
\begin{proof} 
Build a mixed multigraph $M$ (see \cref{def:out-connected}) with vertex set $\{a_1, \ldots, a_s\} \cup \{v\}$ corresponding to the subparts of $p_0$ plus a distinguished vertex $v$. Place a directed edge $(a_i, a_j)$ for each edge owned by $a_i$ with other end in $a_j$, and place an undirected edge $\{a_i, a_j\}$ if there is a free edge with an end in each of $a_i$ and $a_j$. Also place a directed edge $(a_i, v)$ for each edge owned by $a_i$ with other end outside of $p_0$.

Observe that since $p_0$ is rotation-connected, $M$ is out-connected. Indeed, an $\A$-matching corresponds exactly to a set of one out-edge from each vertex of $M$, with edges owned by a subpart $a_i$ with an end outside of $p_0$ corresponding to out-edges to $v$. So, since after removing any $\A$-matching, any two vertices in $p_0$ are still connected, we have that after removing any one out-edge from each vertex of $M$, any two non-$v$ vertices of $M$ are still connected in the underlying undirected graph. Also observe that $M$ has $s$ vertices besides $v$, $x_0$ directed edges not to $v$, $x_1$ directed edges to $v$, and $y$ undirected edges. By \cref{prop:mixed multigraphs}, we have that $\frac12x_0 + y \geq s-1$ or $\frac12(x_0+x_1)+y\geq s$. If the latter inequality holds, then from \eqref{m* lower bound 1}, we obtain $m^*(p_0) \geq \frac\delta2|p_0|$, but this contradicts \eqref{m* upper bound}. Thus we must have $\frac12x_0 + y \geq s-1$, as desired.
\end{proof}

Together with \eqref{m* lower bound 1}, this claim yields \begin{align} \label{m* lower bound 2} m^*(p_0) \geq \frac\delta2 |p_0| + \frac12 x_1 - 1.\end{align}

Combining \eqref{m* upper bound} and \eqref{m* lower bound 2}, we conclude that $x_1$ is $0$ or $1$. That is, $p_0$ only owns at most one edge in $\hat G$ whose other end is outside of $p_0$. If such an edge $e$ is present, say it is owned by the subpart $a_1$. When choosing which edge of $\hat G$ owned by $a_1$ to include in our $\A$-matching $M$, we choose $e$, and $e$ is thus removed when forming $G'$. This concludes the proof of property 4 of the lemma.

Finally, we prove property 3. From \eqref{m* upper bound} and \eqref{m* lower bound 2}, we see that $d(p_0)=m^*(p_0)/|p_0| = \delta/2 - 1/k'$, where $k'$ is either $|p_0|$ or $2|p_0|$, and $1/k' > 1/(k+1)$ so $k'\leq k$. Furthermore, if $\delta$ and $|p_0|$ are both odd, then since $m^*(p_0)$ is an integer, \eqref{m* lower bound 2} tells us that $m^*(p_0) \geq \frac{\delta}{2} |p_0| - \frac{1}{2}$, so we must have $k'=2|p_0|$. That is, $k'$ is even when $\delta$ is odd.
\end{proof}

We can now finish the proof of the only if direction of Theorem \ref{thm:sparse}. The proof also yields \cref{thm:sparse tuza}.

\begin{proof}[Proof of \cref{thm:sparse}, only if direction, and proof of \cref{thm:sparse tuza}]

Suppose $w_F < \delta/2 - 1/(k+1)$, where $k+1$ is even if $\delta$ is odd. We know that $k \geq \delta+1$ by \eqref{eq:delta-bound}. To prove both theorems, we need to show that $\wsat(n, F) \leq (\delta/2 - 1/k)n + O(1)$ if $\delta$ is even, and $\wsat(n, F) \leq (\delta/2 - 1/(k-1))n + O(1)$ if $\delta$ is odd.

Let $n_0\gg_k 1$ be large, and let $G\in\uwSAT(n_0, F)$.
We apply \cref{lem:goodpartition} to obtain the partition $\P$ and graph $G'$ with the properties described in the lemma. 
From property 4 of the lemma, the least dense part $p_0$ of $\P$ is disconnected from its complement in $G'$. Property 1 of the lemma says that $\P$ is a coarsening of $\A$ and inherits ownership from $\A$, and property 2 says that parts of $\P$ only own edges in $G$ with at least one end in the part. Hence, by definition of rotation and $\A$-matchings, it also holds for $\hat G$ and hence for $G'$ that if a part $p\in \P$ owns an edge, then the edge has at least one end in $p$. In particular, $p_0$ owns precisely the edges of $G'$ with both ends in $p_0$. 

By \cref{lem:rotation}, $G'\in\uwSAT(n_0, F)$.
We can now form a sequence of weakly $F$-saturated graphs by copying $p_0$ and its edges, as follows. 

Let $G_0 = G'$ and, for $i\geq 1$, let $G_i$ be the disjoint union of $G_{i-1}$ and a copy of the vertex set $p_0$ and all edges $p_0$ owns. Since $G_{i-1}$ is weakly $F$-saturated, $G_i$ is too: we can first restore the missing edges of $G_{i-1}$, then restore the edges within the new copy of $p_0$ and between the new copy of $p_0$ and the vertices of $G_{i-1}$ besides the old copies of $p_0$ in there, in the same manner they were restored in $G_{i-1}$; finally, we restore edges between the copies of $p_0$, which is easy since at this point both copies are complete to the rest of the graph. This last step relies on a small technicality that since $d(p_0) \ge \delta/2 - 1/|p_0|$, $|p_0| \le k+1$, so $p_0$ is tiny compared to the whole graph $G_i$. 

We have
\begin{align*}
e(G_i) = m^*(p_0) i + |e(G_0)| = \frac{m^*(p_0)}{|p_0|} (|V(G_i)| - V(G_0)|) + |e(G_0)| = d(p_0) |V(G_i)| + O(1).
\end{align*}
Hence, we conclude that $$\wsat(n, F) \leq d(p_0)n + O(1).$$

Property 3 in \cref{lem:goodpartition} tells us that $d(p_0) \leq \delta/2 - 1/k$ if $\delta$ is even, and $d(p_0)\leq \delta/2 - 1/(k-1)$ if $\delta$ is odd. This completes the proof.
\end{proof}

\section{The Dense Regime}\label{dense section}
In this section we prove \cref{thm:dense}.

\begin{proof}[Proof of \cref{thm:dense}]
As discussed in the introduction, $w = \delta - 1$ is obtained by a complete graph on $\delta+1$ vertices. So, we may assume $w < \delta - 1$ and $\delta \geq 3$. 
By \cref{lem:f-tilde}, given $\delta$ and $w = a/b$ a rational number in the desired range, it suffices to construct $F$ with $\delta_F = \delta$ and $\gamma_F = a/b$. 

We construct $F$ with two vertex-disjoint parts, $G$ and $K$. The part $G$ has $k$ vertices, where $k$ is a sufficiently large even multiple of $b$. The part $K$ is a clique of size much greater than $k$ (say, $k^3$ for concreteness).

To describe the edges inside $G$ and between $G$ and $K$, let $\ell$ be the unique integer such that $a/b \in [\delta/2 + \ell/2, \delta/2 + \ell/2+1/2)$; note that $0 \leq \ell \leq \delta-3$. 
Label the vertices of $G$ as $0, 1, \ldots, k-1$. Throughout this proof, labels are understood modulo $k$. For each vertex $i$, place an edge between $i$ and $i+\ell'$ for each $\ell'\in \{1, 2, \ldots, \floor{(\delta-\ell)/2}\}$. If $\delta - \ell$ is odd, also put an edge between $i$ and $i+k/2$ for each $i$. So far, $G$ is a $(\delta - \ell)$-regular Cayley graph. Finally, put $\ell$ edges from each vertex of $G$ to arbitrary vertices of $K$. 

Now we modify the edges as follows. Let $t = (a/b - \delta/2 - \ell/2)k+1$. By definition of $\ell$ and since $k$ is a large even multiple of $b$, we have that $t\in[0, k/2]$ is an integer. If $\delta - \ell$ is odd, pick $t$ indices $i\in\{0,1,\ldots, k/2-1\}$, chosen to be as spread out as possible. To be precise, we choose the indices $i$ to be the integers $\floor{(k/2)/t \cdot j}$ for $j\in\{0, 1, \ldots, t-1\}$. For each of these indices $i$, delete the edge between $i$ and $i+k/2$, and instead put one edge from each of $i$ and $i+k/2$ to arbitrary vertices of $K$. If instead $\delta-\ell$ is even, pick $t$ indices $i\in\{0,1,\ldots, k-1]$, chosen to be as spread out as possible, i.e., $\floor{k/t \cdot j}$ for $j\in\{0, 1, \ldots, t-1\}$. For each of these indices $i$, delete the edge between $i$ and $i+(\delta-\ell)/2$, and instead put one edge from each of $i$ and $i+(\delta-\ell)/2$ to arbitrary vertices of $K$. See Figure \ref{fig:dense construction} for an example.

\begin{figure}[!htb]
\minipage[t]{0.5\textwidth}
\centering 
\begin{tikzpicture}[x=0.75pt,y=0.75pt,yscale=-1.2,xscale=1.2]

%Shape: Regular Polygon [id:dp06296827515633185] 
\draw   (400,143) -- (388.54,178.27) -- (358.54,200.06) -- (321.46,200.06) -- (291.46,178.27) -- (280,143) -- (291.46,107.73) -- (321.46,85.94) -- (358.54,85.94) -- (388.54,107.73) -- cycle ;
%Shape: Circle [id:dp2871133184664375] 
\draw  [fill={rgb, 255:red, 0; green, 0; blue, 0 }  ,fill opacity=1 ] (288.61,107.06) .. controls (288.61,104.85) and (290.4,103.06) .. (292.61,103.06) .. controls (294.82,103.06) and (296.61,104.85) .. (296.61,107.06) .. controls (296.61,109.27) and (294.82,111.06) .. (292.61,111.06) .. controls (290.4,111.06) and (288.61,109.27) .. (288.61,107.06) -- cycle ;
%Shape: Circle [id:dp6482595188618584] 
\draw  [fill={rgb, 255:red, 0; green, 0; blue, 0 }  ,fill opacity=1 ] (383.6,107.1) .. controls (383.6,104.89) and (385.39,103.1) .. (387.6,103.1) .. controls (389.81,103.1) and (391.6,104.89) .. (391.6,107.1) .. controls (391.6,109.31) and (389.81,111.1) .. (387.6,111.1) .. controls (385.39,111.1) and (383.6,109.31) .. (383.6,107.1) -- cycle ;
%Shape: Circle [id:dp2571454856664033] 
\draw  [fill={rgb, 255:red, 0; green, 0; blue, 0 }  ,fill opacity=1 ] (317.46,85.94) .. controls (317.46,83.73) and (319.25,81.94) .. (321.46,81.94) .. controls (323.67,81.94) and (325.46,83.73) .. (325.46,85.94) .. controls (325.46,88.15) and (323.67,89.94) .. (321.46,89.94) .. controls (319.25,89.94) and (317.46,88.15) .. (317.46,85.94) -- cycle ;
%Shape: Circle [id:dp7388256010667793] 
\draw  [fill={rgb, 255:red, 0; green, 0; blue, 0 }  ,fill opacity=1 ] (354.6,85.9) .. controls (354.6,83.69) and (356.39,81.9) .. (358.6,81.9) .. controls (360.81,81.9) and (362.6,83.69) .. (362.6,85.9) .. controls (362.6,88.11) and (360.81,89.9) .. (358.6,89.9) .. controls (356.39,89.9) and (354.6,88.11) .. (354.6,85.9) -- cycle ;
%Shape: Circle [id:dp08363603503437689] 
\draw  [fill={rgb, 255:red, 0; green, 0; blue, 0 }  ,fill opacity=1 ] (276.61,142.06) .. controls (276.61,139.85) and (278.4,138.06) .. (280.61,138.06) .. controls (282.82,138.06) and (284.61,139.85) .. (284.61,142.06) .. controls (284.61,144.27) and (282.82,146.06) .. (280.61,146.06) .. controls (278.4,146.06) and (276.61,144.27) .. (276.61,142.06) -- cycle ;
%Shape: Circle [id:dp6691257967632344] 
\draw  [fill={rgb, 255:red, 0; green, 0; blue, 0 }  ,fill opacity=1 ] (288.61,177.06) .. controls (288.61,174.85) and (290.4,173.06) .. (292.61,173.06) .. controls (294.82,173.06) and (296.61,174.85) .. (296.61,177.06) .. controls (296.61,179.27) and (294.82,181.06) .. (292.61,181.06) .. controls (290.4,181.06) and (288.61,179.27) .. (288.61,177.06) -- cycle ;
%Shape: Circle [id:dp7904600447385947] 
\draw  [fill={rgb, 255:red, 0; green, 0; blue, 0 }  ,fill opacity=1 ] (383.6,177.1) .. controls (383.6,174.89) and (385.39,173.1) .. (387.6,173.1) .. controls (389.81,173.1) and (391.6,174.89) .. (391.6,177.1) .. controls (391.6,179.31) and (389.81,181.1) .. (387.6,181.1) .. controls (385.39,181.1) and (383.6,179.31) .. (383.6,177.1) -- cycle ;
%Shape: Circle [id:dp05821829287576841] 
\draw  [fill={rgb, 255:red, 0; green, 0; blue, 0 }  ,fill opacity=1 ] (317.5,198.9) .. controls (317.5,196.69) and (319.29,194.9) .. (321.5,194.9) .. controls (323.71,194.9) and (325.5,196.69) .. (325.5,198.9) .. controls (325.5,201.11) and (323.71,202.9) .. (321.5,202.9) .. controls (319.29,202.9) and (317.5,201.11) .. (317.5,198.9) -- cycle ;
%Shape: Circle [id:dp6542667615041656] 
\draw  [fill={rgb, 255:red, 0; green, 0; blue, 0 }  ,fill opacity=1 ] (354.6,198.6) .. controls (354.6,196.39) and (356.39,194.6) .. (358.6,194.6) .. controls (360.81,194.6) and (362.6,196.39) .. (362.6,198.6) .. controls (362.6,200.81) and (360.81,202.6) .. (358.6,202.6) .. controls (356.39,202.6) and (354.6,200.81) .. (354.6,198.6) -- cycle ;
%Shape: Circle [id:dp05428751022009026] 
\draw  [fill={rgb, 255:red, 0; green, 0; blue, 0 }  ,fill opacity=1 ] (394.6,142.1) .. controls (394.6,139.89) and (396.39,138.1) .. (398.6,138.1) .. controls (400.81,138.1) and (402.6,139.89) .. (402.6,142.1) .. controls (402.6,144.31) and (400.81,146.1) .. (398.6,146.1) .. controls (396.39,146.1) and (394.6,144.31) .. (394.6,142.1) -- cycle ;
%Straight Lines [id:da9207645958169923] 
\draw    (321.46,85.94) -- (358.6,198.6) ;
%Straight Lines [id:da97048653575495] 
\draw    (292.61,107.06) -- (387.6,177.1) ;
%Straight Lines [id:da49003522782743214] 
\draw    (280.61,142.06) -- (398.6,142.1) ;
%Straight Lines [id:da7626380055430739] 
\draw    (292.61,177.06) -- (387.6,107.1) ;
%Straight Lines [id:da504215866026877] 
\draw    (358.54,85.94) -- (321.5,198.9) ;
%Straight Lines [id:da2178071094283579] 
\draw    (280.61,142.06) -- (321.46,85.94) ;
%Straight Lines [id:da4819435839135793] 
\draw    (292.61,107.06) -- (358.54,85.94) ;
%Straight Lines [id:da9342163370647005] 
\draw    (321.46,85.94) -- (387.6,107.1) ;
%Straight Lines [id:da3671371683969036] 
\draw    (358.6,85.9) -- (398.6,142.1) ;
%Straight Lines [id:da911093754652648] 
\draw    (387.6,107.1) -- (387.6,177.1) ;
%Straight Lines [id:da877585211959668] 
\draw    (358.6,198.6) -- (398.6,142.1) ;
%Straight Lines [id:da45781975828813926] 
\draw    (321.5,198.9) -- (387.6,177.1) ;
%Straight Lines [id:da7866381839768637] 
\draw    (292.61,177.06) -- (358.6,198.6) ;
%Straight Lines [id:da12022359789276826] 
\draw    (280.61,142.06) -- (321.5,198.9) ;
%Straight Lines [id:da8197531730344575] 
\draw    (292.61,107.06) -- (292.61,177.06) ;
%Straight Lines [id:da763707166827934] 
\draw    (296.89,53.44) -- (292.61,107.06) ;
%Shape: Rectangle [id:dp7690131471154774] 
\draw   (262.89,37.44) -- (412.89,37.44) -- (412.89,62) -- (262.89,62) -- cycle ;
%Straight Lines [id:da9146394643350386] 
\draw    (306.89,53.44) -- (292.61,107.06) ;
%Straight Lines [id:da9456899002605648] 
\draw    (315.89,54.44) -- (321.46,85.94) ;
%Straight Lines [id:da11952972721513999] 
\draw    (323.89,52.44) -- (321.46,85.94) ;
%Straight Lines [id:da027032007657653456] 
\draw    (355.89,53.44) -- (358.54,85.94) ;
%Straight Lines [id:da8952476966022551] 
\draw    (363.89,53.44) -- (358.6,85.9) ;
%Straight Lines [id:da5511476317716497] 
\draw    (371.89,57.44) -- (387.6,107.1) ;
%Straight Lines [id:da7160986510079441] 
\draw    (381.89,56.44) -- (387.6,107.1) ;
%Straight Lines [id:da3996252250071053] 
\draw    (389.89,53.44) -- (398.6,142.1) ;
%Straight Lines [id:da3492746988587392] 
\draw    (395.89,56.44) -- (398.6,142.1) ;
%Curve Lines [id:da7310900111032136] 
\draw    (387.6,177.1) .. controls (413.89,145.44) and (408.89,97.44) .. (402.89,56.44) ;
%Curve Lines [id:da13078650982412343] 
\draw    (387.6,177.1) .. controls (413.89,145.44) and (414.89,97.44) .. (408.89,56.44) ;
%Curve Lines [id:da9596413895356863] 
\draw    (294.6,179.1) .. controls (260.46,147.44) and (273.1,98.44) .. (280.89,57.44) ;
%Curve Lines [id:da8400590437781903] 
\draw    (294.6,179.1) .. controls (260.46,147.44) and (264.1,98.44) .. (271.89,57.44) ;
%Curve Lines [id:da87936231667277] 
\draw    (358.6,198.6) .. controls (432.89,182.44) and (436.89,85.44) .. (406.89,47.44) ;
%Curve Lines [id:da8763211771246312] 
\draw    (358.6,198.6) .. controls (442.89,191.44) and (440.89,78.44) .. (408.89,41.44) ;
%Curve Lines [id:da34699396492202095] 
\draw    (321.5,198.9) .. controls (246.89,181.44) and (249.89,90.44) .. (268.89,52.44) ;
%Curve Lines [id:da05758758924706442] 
\draw    (321.5,198.9) .. controls (232.89,184.44) and (247.89,85.44) .. (266.89,46.44) ;
%Straight Lines [id:da527117447432212] 
\draw    (291.5,53.5) -- (280.61,142.06) ;
%Straight Lines [id:da032112441671121594] 
\draw    (285.89,51.44) -- (280.61,142.06) ;

% Text Node
\draw (332.44,42.4) node [anchor=north west][inner sep=0.75pt]    {$K$};

\end{tikzpicture}
\caption*{Initial construction of $F$ before modification.}
\endminipage\hfill
\minipage[t]{0.5\textwidth}
\centering

\begin{tikzpicture}[x=0.75pt,y=0.75pt,yscale=-1.2,xscale=1.2]
%uncomment if require: \path (0,300); %set diagram left start at 0, and has height of 300

%Shape: Regular Polygon [id:dp07265264979178532] 
\draw   (400,143) -- (388.54,178.27) -- (358.54,200.06) -- (321.46,200.06) -- (291.46,178.27) -- (280,143) -- (291.46,107.73) -- (321.46,85.94) -- (358.54,85.94) -- (388.54,107.73) -- cycle ;
%Shape: Circle [id:dp8879609385209535] 
\draw  [fill={rgb, 255:red, 0; green, 0; blue, 0 }  ,fill opacity=1 ] (288.61,107.06) .. controls (288.61,104.85) and (290.4,103.06) .. (292.61,103.06) .. controls (294.82,103.06) and (296.61,104.85) .. (296.61,107.06) .. controls (296.61,109.27) and (294.82,111.06) .. (292.61,111.06) .. controls (290.4,111.06) and (288.61,109.27) .. (288.61,107.06) -- cycle ;
%Shape: Circle [id:dp47658867192872767] 
\draw  [fill={rgb, 255:red, 0; green, 0; blue, 0 }  ,fill opacity=1 ] (383.6,107.1) .. controls (383.6,104.89) and (385.39,103.1) .. (387.6,103.1) .. controls (389.81,103.1) and (391.6,104.89) .. (391.6,107.1) .. controls (391.6,109.31) and (389.81,111.1) .. (387.6,111.1) .. controls (385.39,111.1) and (383.6,109.31) .. (383.6,107.1) -- cycle ;
%Shape: Circle [id:dp896484194409703] 
\draw  [fill={rgb, 255:red, 0; green, 0; blue, 0 }  ,fill opacity=1 ] (317.46,85.94) .. controls (317.46,83.73) and (319.25,81.94) .. (321.46,81.94) .. controls (323.67,81.94) and (325.46,83.73) .. (325.46,85.94) .. controls (325.46,88.15) and (323.67,89.94) .. (321.46,89.94) .. controls (319.25,89.94) and (317.46,88.15) .. (317.46,85.94) -- cycle ;
%Shape: Circle [id:dp5252106306328439] 
\draw  [fill={rgb, 255:red, 0; green, 0; blue, 0 }  ,fill opacity=1 ] (354.6,85.9) .. controls (354.6,83.69) and (356.39,81.9) .. (358.6,81.9) .. controls (360.81,81.9) and (362.6,83.69) .. (362.6,85.9) .. controls (362.6,88.11) and (360.81,89.9) .. (358.6,89.9) .. controls (356.39,89.9) and (354.6,88.11) .. (354.6,85.9) -- cycle ;
%Shape: Circle [id:dp9085537845330867] 
\draw  [fill={rgb, 255:red, 0; green, 0; blue, 0 }  ,fill opacity=1 ] (276.61,142.06) .. controls (276.61,139.85) and (278.4,138.06) .. (280.61,138.06) .. controls (282.82,138.06) and (284.61,139.85) .. (284.61,142.06) .. controls (284.61,144.27) and (282.82,146.06) .. (280.61,146.06) .. controls (278.4,146.06) and (276.61,144.27) .. (276.61,142.06) -- cycle ;
%Shape: Circle [id:dp2152985753477229] 
\draw  [fill={rgb, 255:red, 0; green, 0; blue, 0 }  ,fill opacity=1 ] (288.61,177.06) .. controls (288.61,174.85) and (290.4,173.06) .. (292.61,173.06) .. controls (294.82,173.06) and (296.61,174.85) .. (296.61,177.06) .. controls (296.61,179.27) and (294.82,181.06) .. (292.61,181.06) .. controls (290.4,181.06) and (288.61,179.27) .. (288.61,177.06) -- cycle ;
%Shape: Circle [id:dp7711532241102528] 
\draw  [fill={rgb, 255:red, 0; green, 0; blue, 0 }  ,fill opacity=1 ] (383.6,177.1) .. controls (383.6,174.89) and (385.39,173.1) .. (387.6,173.1) .. controls (389.81,173.1) and (391.6,174.89) .. (391.6,177.1) .. controls (391.6,179.31) and (389.81,181.1) .. (387.6,181.1) .. controls (385.39,181.1) and (383.6,179.31) .. (383.6,177.1) -- cycle ;
%Shape: Circle [id:dp1502409968557724] 
\draw  [fill={rgb, 255:red, 0; green, 0; blue, 0 }  ,fill opacity=1 ] (317.5,198.9) .. controls (317.5,196.69) and (319.29,194.9) .. (321.5,194.9) .. controls (323.71,194.9) and (325.5,196.69) .. (325.5,198.9) .. controls (325.5,201.11) and (323.71,202.9) .. (321.5,202.9) .. controls (319.29,202.9) and (317.5,201.11) .. (317.5,198.9) -- cycle ;
%Shape: Circle [id:dp5040941695049018] 
\draw  [fill={rgb, 255:red, 0; green, 0; blue, 0 }  ,fill opacity=1 ] (354.6,198.6) .. controls (354.6,196.39) and (356.39,194.6) .. (358.6,194.6) .. controls (360.81,194.6) and (362.6,196.39) .. (362.6,198.6) .. controls (362.6,200.81) and (360.81,202.6) .. (358.6,202.6) .. controls (356.39,202.6) and (354.6,200.81) .. (354.6,198.6) -- cycle ;
%Shape: Circle [id:dp5072871626609919] 
\draw  [fill={rgb, 255:red, 0; green, 0; blue, 0 }  ,fill opacity=1 ] (394.6,142.1) .. controls (394.6,139.89) and (396.39,138.1) .. (398.6,138.1) .. controls (400.81,138.1) and (402.6,139.89) .. (402.6,142.1) .. controls (402.6,144.31) and (400.81,146.1) .. (398.6,146.1) .. controls (396.39,146.1) and (394.6,144.31) .. (394.6,142.1) -- cycle ;
%Straight Lines [id:da44614073310285207] 
\draw    (292.61,107.06) -- (387.6,177.1) ;
%Straight Lines [id:da0592348548375935] 
\draw    (292.61,177.06) -- (387.6,107.1) ;
%Straight Lines [id:da021630669840837724] 
\draw    (280.61,142.06) -- (321.46,85.94) ;
%Straight Lines [id:da667866773939958] 
\draw    (292.61,107.06) -- (358.54,85.94) ;
%Straight Lines [id:da24320764150306196] 
\draw    (321.46,85.94) -- (387.6,107.1) ;
%Straight Lines [id:da3666358931806837] 
\draw    (358.6,85.9) -- (398.6,142.1) ;
%Straight Lines [id:da30432948034033735] 
\draw    (387.6,107.1) -- (387.6,177.1) ;
%Straight Lines [id:da3809059904945584] 
\draw    (358.6,198.6) -- (398.6,142.1) ;
%Straight Lines [id:da08876219150726516] 
\draw    (321.5,198.9) -- (387.6,177.1) ;
%Straight Lines [id:da5767001978105397] 
\draw    (292.61,177.06) -- (358.6,198.6) ;
%Straight Lines [id:da7826985480590846] 
\draw    (280.61,142.06) -- (321.5,198.9) ;
%Straight Lines [id:da8215674803039089] 
\draw    (292.61,107.06) -- (292.61,177.06) ;
%Straight Lines [id:da8232800703529446] 
\draw    (296.89,53.44) -- (292.61,107.06) ;
%Shape: Rectangle [id:dp37491898211547614] 
\draw   (262.89,37.44) -- (412.89,37.44) -- (412.89,62) -- (262.89,62) -- cycle ;
%Straight Lines [id:da49873935551645054] 
\draw    (306.89,53.44) -- (292.61,107.06) ;
%Straight Lines [id:da46842471870593094] 
\draw    (315.89,54.44) -- (321.46,85.94) ;
%Straight Lines [id:da07030488258222978] 
\draw    (323.89,52.44) -- (321.46,85.94) ;
%Straight Lines [id:da6910923528456595] 
\draw    (355.89,53.44) -- (358.54,85.94) ;
%Straight Lines [id:da27596889794467183] 
\draw    (363.89,53.44) -- (358.6,85.9) ;
%Straight Lines [id:da9259253867339892] 
\draw    (371.89,57.44) -- (387.6,107.1) ;
%Straight Lines [id:da8197393987624286] 
\draw    (381.89,56.44) -- (387.6,107.1) ;
%Straight Lines [id:da30650370691019524] 
\draw    (389.89,53.44) -- (398.6,142.1) ;
%Straight Lines [id:da7592204054553621] 
\draw    (395.89,56.44) -- (398.6,142.1) ;
%Curve Lines [id:da029246633566241242] 
\draw    (387.6,177.1) .. controls (413.89,145.44) and (408.89,97.44) .. (402.89,56.44) ;
%Curve Lines [id:da5641101995881114] 
\draw    (387.6,177.1) .. controls (413.89,145.44) and (414.89,97.44) .. (408.89,56.44) ;
%Curve Lines [id:da8524346710988804] 
\draw    (294.6,179.1) .. controls (260.46,147.44) and (273.1,98.44) .. (280.89,57.44) ;
%Curve Lines [id:da10782054895147686] 
\draw    (294.6,179.1) .. controls (260.46,147.44) and (264.1,98.44) .. (271.89,57.44) ;
%Curve Lines [id:da9282138553608427] 
\draw    (358.6,198.6) .. controls (432.89,182.44) and (436.89,85.44) .. (406.89,47.44) ;
%Curve Lines [id:da051254627808150155] 
\draw    (358.6,198.6) .. controls (442.89,191.44) and (440.89,78.44) .. (408.89,41.44) ;
%Curve Lines [id:da6598313191983428] 
\draw    (321.5,198.9) .. controls (246.89,181.44) and (249.89,90.44) .. (268.89,52.44) ;
%Curve Lines [id:da3373875560553494] 
\draw    (321.5,198.9) .. controls (232.89,184.44) and (247.89,85.44) .. (266.89,46.44) ;
%Straight Lines [id:da08116411159227199] 
\draw    (291.5,53.5) -- (280.61,142.06) ;
%Straight Lines [id:da8000252909668603] 
\draw    (285.89,51.44) -- (280.61,142.06) ;
%Straight Lines [id:da005911134918641148] 
\draw [color={rgb, 255:red, 255; green, 0; blue, 0 }  ,draw opacity=1 ]   (329.89,54.67) -- (321.46,85.94) ;
%Straight Lines [id:da16993393129118795] 
\draw [color={rgb, 255:red, 255; green, 0; blue, 0 }  ,draw opacity=1 ]   (350.89,56.67) -- (358.54,85.94) ;
%Straight Lines [id:da950836262659] 
\draw [color={rgb, 255:red, 255; green, 0; blue, 0 }  ,draw opacity=1 ]   (295.89,44.67) -- (280.61,142.06) ;
%Straight Lines [id:da1995420048456421] 
\draw [color={rgb, 255:red, 255; green, 0; blue, 0 }  ,draw opacity=1 ]   (384.89,47.67) -- (398.6,142.1) ;
%Curve Lines [id:da8746025092740758] 
\draw [color={rgb, 255:red, 255; green, 0; blue, 0 }  ,draw opacity=1 ]   (321.5,198.9) .. controls (229.89,191.67) and (239.89,71.67) .. (266.89,40.67) ;
%Curve Lines [id:da9713234139044978] 
\draw [color={rgb, 255:red, 255; green, 0; blue, 0 }  ,draw opacity=1 ]   (358.6,198.6) .. controls (478.89,197.67) and (422.89,19.67) .. (401.89,40.67) ;

% Text Node
\draw (332.44,42.4) node [anchor=north west][inner sep=0.75pt]    {$K$};

\end{tikzpicture}
\caption*{The graph $F$, including the modification.}
\endminipage\hfill
\caption{Construction of $F$ for $\delta = 7$ and $w = \frac{47}{10}$. We have $k=10, \ell = 2,$ and $t = 3$.}
\label{fig:dense construction}
\end{figure}

These modifications do not change the degrees (in $F$) of vertices in $G$, so they each have degree $\delta$. By directly counting edges in our construction, we have $e(G) = (\delta - \ell)k/2 - t$ and $e(G, K) = \ell k + 2t$. Therefore, $$\gamma_F(G) = \frac{(\delta +\ell)k/2 + t - 1}{k} = \frac{(\delta +\ell)k/2 + (a/b-\delta/2-\ell/2)k+1 - 1}{k} = \frac ab.$$
Thus, it remains to show that for any $\emptyset\subsetneq S\subseteq V(F)$, we have $\gamma_F(S) \geq a/b$.

Since $|K|$ is a very large clique, any $S\subseteq V(F)$ containing even one vertex of $K$ has $\gamma_F(S) \geq |K|/(k+1) > a/b$. Thus we may assume $S\subseteq V(G)$.

Rather than working with $S$ directly, we first show that it suffices to prove $\gamma_F(S') \geq a/b$ for any set $S'$ with certain convenient properties. We justify this in the claim below.

\begin{claim*}
Assume there exists $S$ with $\emptyset \subsetneq S \subseteq V(G)$ with $\gamma_F(S) < a/b$. Then, there exists a set $S'$ with $\emptyset \subsetneq S' \subseteq V(G)$ with $\gamma_F(S') < a/b$ and with the property that $|N_F(v)\sm S'| < \frac{\delta+\ell+1}{2}$ for all $v\in S'$, while $|N_F(v)\sm S'| \geq \frac{\delta+\ell+1}{2}$ for all $v\in V(G)\sm S'$.
\end{claim*}
\begin{proof}
Starting by letting $S' = S$ and modify $S'$ in two stages as follows. In the first stage, while there exists a vertex $v\in V(G)\sm S'$ with $|N_F(v)\sm S'| < \frac{\delta+\ell+1}{2}$, we add such a vertex to $S'$. The first stage ends when there are no more such vertices. In the second stage, while there exists a vertex $v\in S'$ with $|N_F(v)\sm S'| \geq \frac{\delta+\ell+1}2$, we remove such a vertex from $S'$. The second stage ends when there are no more such vertices in $S'$, yielding the final set $S'$.

The final set $S'$ has $|N_F(v)\sm S'| < \frac{\delta+\ell+1}{2}$ for all $v\in S'$; if this were not the case for some $v\in S'$, we would have removed $v$ from $S'$ in the second stage. We also have $|N_F(v)\sm S'| \geq \frac{\delta+\ell+1}{2}$ for all $v\in V(G)\sm S'$. Indeed, consider two cases. If some $v\in V(G)\sm S'$ was removed from $S'$ in the second stage, then at the moment it was removed, it had $|N_F(v)\sm S'| \geq \frac{\delta+\ell+1}{2}$; further removals from $S'$ later in the second stage can only increase $|N_F(v)\sm S'|$. On the other hand, if $v\in V(G)\sm S'$ was not in $S'$ at the beginning of the second stage, the fact that we did not add $v$ to $S'$ in the first stage implies that at the beginning of the second stage we have $|N_F(v)\sm S'| \geq \frac{\delta+\ell+1}{2}$. Once again, the second stage can only increase $|N_F(v)\sm S'|$.

Note that each removal reduces $m_F(S')$ by at least $\delta/2 + \ell/2+1/2$, and each addition increases $m_F(S')$ by at most $\delta/2 + \ell/2$. 
Since $a/b\in[\delta/2 + \ell/2, \delta/2 + \ell/2 + 1/2)$, each of these changes brings $\gamma_F(S') = (m_F(S')-1)/|S'|$ closer to a fraction which is at most $a/b$. So, since $\gamma_F(S) < a/b$, we also have $\gamma_F(S') < a/b$. Note that we never reduce $|S'|$ to $0$ since $|S|\geq 1$ and if ever $|S'|=1$ then $\gamma_F(S') \geq \delta - 1 > a/b$, contradicting that $\gamma_F(S')$ stays below $a/b$.
\end{proof}

Using the claim, to complete the proof of the theorem, it suffices to show that for any $S'$ with $\emptyset \subsetneq S' \subset V(G)$ satisfying $|N_F(v)\sm S'| < \frac{\delta+\ell+1}{2}$ for all $v\in S'$ and $|N_F(v)\sm S'| \geq \frac{\delta+\ell+1}{2}$ for all $v\in V(G)\sm S'$, we have $\gamma_F(S') \geq a/b$.

We next proceed with some casework, which we outline now. In Case 1, we assume that $V(G)\sm S'$ contains many consecutive vertices of $G$, i.e., there is a big gap in $S'$. In this case, we show that $S'$ consists of a collection of ``strings" of vertices, where each string is a large number of vertices in a row and is sufficiently separated from other strings. We then split into two cases based on parity of $\delta - \ell$ (Cases 1a and 1b), in each case using the structure of strings to argue that $\gamma_F(S') \geq a/b$. In Case 2, we assume instead that $S'$ has no large gaps. Here, we show that if $S'$ has many vertices of $G$ in a row (Case 2a), then in fact $S' = G$ (so $\gamma_F(S') = a/b$), while otherwise (Case 2b), there are too many edges between $S'$ and its complement, again yielding $\gamma_F(S') \geq a/b$. 

\textbf{Case 1}. In this case, there is a gap of $\floor{(\delta - \ell)/2}$ consecutive vertices of $G$ which are not in $S'$. Define ``strings" of $S'$ to be the equivalence classes of vertices in $S'$ where $i \sim i'$ if there exists a sequence of vertices $i = i_0, i_1, \ldots, i_p = i'$ with each $i_j \in S$ and each pair $i_{j}, i_{j+1}$ has difference (modulo $k$) at most $\floor{(\delta - \ell)/2}$. That is, any string in $S'$ is separated by gaps of at least $\floor{(\delta - \ell)/2}$ from its neighboring strings.

\begin{claim*} 
Each string of vertices in $S'$ consists of vertices all in a row. That is, $S'$ has no gaps of size less than $\floor{(\delta - \ell)/2}$. Furthermore, each string contains at least $\floor{(\delta - \ell)/2}+1$ vertices.
\end{claim*} 
\begin{proof}
In any string in $S'$, there must be a left-most vertex $v$ of the string by the assumption of Case 1. We show that the $\floor{(\delta - \ell)/2}$ vertices immediately to the right of $v$ are all in $S'$. If not, then $|N_F(v)\sm S'| \geq \frac{\delta+\ell+1}2$. Indeed, there are $\ell$ edges from $v$ to $K$, and $\floor{(\delta - \ell)/2}$ edges from $v$ to the vertices immediately left of it, all of which are not in $S'$ by definition of $v$. (If the edge from $v$ to $v-\floor{(\delta - \ell)/2}$ was deleted in the modification step of the construction, this count includes instead the extra edge from $v$ to $K$.) Under the assumption that not all of the $\floor{(\delta - \ell)/2}$ vertices immediately to the right of $v$ are in $S'$, there is at least one more vertex in $N_F(v)\sm S'$, adding to a total of at least $\ell + \floor{(\delta - \ell)/2} + 1 \geq \delta/2 + \ell/2 + 1/2$, as desired.

Now, suppose $v$ and the $p-1$ vertices immediately to the right of it are in $S'$, but vertex $v+p$ is not. We have shown that $p-1 \geq \floor{(\delta - \ell)/2}$. We now argue that vertices $v+p+1$ through $v+p+\floor{(\delta - \ell)/2}-1$ are not in $S'$. If any were, then we would have $|N_F(v+p)\sm S'| \leq \delta/2 + \ell/2$, contradicting that $v+p$ is not in $S'$. Indeed, there is necessarily an edge in $F$ from $v+p$ to any vertex in $\{v+p+1, \ldots, v+p+\floor{(\delta - \ell)/2}-1\}$, and a vertex among these in $S'$ detracts from $|N_F(v+p)\sm S'|$. Also, the $\floor{(\delta - \ell)/2}$ edges from $v+p$ to the vertices immediately left of it do not count toward $|N_F(v+p)\sm S'|$, unless $\delta - \ell$ is even and the edge from $v+p$ to $v+p - (\delta - \ell)/2$ was deleted, in which case this count decreases by $1$. 
Thus, if $\delta - \ell$ is odd, then $|N_F(v+p)\sm S'| \leq \delta - (1 + \floor{(\delta - \ell)/2}) \leq \delta/2 + \ell/2$. If $\delta - \ell$ is even, then again $|N_F(v+p)\sm S'| \leq \delta - (1 + (\delta - \ell)/2 - 1) = \delta/2 + \ell/2$, as desired.

Thus, the vertices from $v+p$ to $v+\floor{(\delta - \ell)/2}-1$ are all not in $S'$, showing that the vertices from $v$ to $v+p-1$ are the entirety of this string.
\end{proof}

The proof for Case 1 now splits into two subcases depending on the parity of $\delta - \ell$.

\textbf{Case 1a:} $\delta - \ell$ is even. Recall that then there are no edges between opposite vertices of $G$. Thus, different strings of $S'$ constitute different connected components of $G[S']$. Since we are trying to show that $\gamma_F(S') = (m_F(S') - 1)/|S'|$ is at least $a/b$, we may assume $G[S']$ is connected. Indeed, restricting to the connected component $C$ of $G[S']$ minimizing $m_F(C) / |C|$, we have $\gamma_F(C) \leq \gamma_F(S')$ since $|C| \leq |S'|$.

For a string of $S'$ with $p$ vertices, $m_F(S')$ counts $\ell p$ edges to $K$, and $p+\ell'$ edges between pairs of vertices whose labels differ by $\ell'$, for each $\ell'\in\{1, 2, \ldots, (\delta - \ell)/2\}$. Furthermore, there are $p - (\delta - \ell)/2$ edges between pairs of vertices both in the string which could be deleted in the modification step of the construction of $F$ and replaced with an edge from each of the ends to $K$, increasing $m_F(S')$ by $1$.

Since the deletion of edges in the modification step affected $t$ edges as spread out as possible among the $k$ edges which could be deleted, the number of deleted edges out of the $p - (\delta - \ell)/2$ relevant edges is at least $\floor{(p-(\delta-\ell)/2)\cdot t /k} \geq (p-(\delta-\ell)/2)\cdot t/k - 1 \geq p t/k - (\delta-\ell)/4 - 1$, where we used that $t\leq k/2$.

Thus, \begin{align*} 
m_F(S') &\geq \ell p + \sum_{\ell' = 1}^{(\delta-\ell)/2}(p+\ell') + p t/k - (\delta-\ell)/4 - 1
\\& = p(\ell + (\delta - \ell)/2 + t/k) + \frac{1}{2}\cdot \frac{\delta - \ell}{2}\left(\frac{\delta - \ell}2+1\right)- (\delta-\ell)/4 - 1
\\&= p(\delta/2 + \ell/2 + t/k) + (\delta-\ell)^2/8 - 1
\\&\geq p(\delta/2 + \ell/2 + t/k) + 1. 
\end{align*}
For the last inequality, recall that $\ell \leq \delta - 3$, so the assumption that $\delta - \ell$ is even implies that $\delta - \ell \geq 4$. 

This implies that $\gamma_F(S') = (m_F(S')-1)/|S'| \geq \delta / 2 + \ell / 2 + t/k > \gamma_F(G) = a/b$, finishing the proof in this subcase.

\textbf{Case 1b:} $\delta-\ell$ is odd. 
We first consider an easy subcase: suppose $S'$ has only one string on $p \leq k/2$ vertices. 
Note that in this subcase, $S'$ does not contain any edges between pairs $i$ and $i+k/2$. So, when the modification step of the construction replaces some of these edges with edges from $i$ and from $i+k/2$ to $K$, $m_F(S')$ remains unchanged. There are $p$ of these kinds of edges incident to $S'$. There are also $\ell p$ other edges from $S'$ to $K$, as well as $p+\ell'$ edges between pairs $i$ and $i+\ell'$, for each $\ell'\in \{1, 2, \ldots, (\delta - \ell - 1)/2\}$. In total,
\begin{align*}m_F(S') &= p + \ell p + \sum_{\ell'=1}^{(\delta - \ell - 1)/2} (p+\ell')
\\&= p(1+\ell+(\delta - \ell - 1)/2) + \frac{1}{2}\frac{\delta-\ell-1}{2}\left(\frac{\delta-\ell-1}2+1\right)
\\&\geq p(\delta/2 + \ell/2 + 1/2) + 1,
\end{align*}
where in the last inequality we used that $\ell \leq \delta - 3$.
Thus $\gamma_F(S') = (m_F(S') - 1)/|S'| \geq \delta/2 + \ell/2 + 1/2 \geq a/b$.

For the rest of Case 1b, rather than counting $m_F(S')$, we count a very similar quantity $\tilde m_F(S')$, where for any edge in $F$ between a pair $i$ and $i+k/2$, $\tilde m_F(S')$ counts $1/2$ for each end of the edge which appears in $S'$. Other kinds of edges are counted toward $\tilde m_F(S')$ just as they are counted toward $m_F(S')$. Clearly $\tilde m_F(S') \leq m_F(S')$, so showing that $(\tilde m_F(S') - 1)/|S'| \geq a/b$ is still a contradiction. However, $\tilde m_F(S')$ is easier to bound in what follows as it allows us to track vertices more individually.

Consider a string $T$ of $S'$ with $p$ vertices. Because we spread out the deletions of the edges between opposite vertices, at least $\floor{t/(k/2) \cdot p} \geq 2t/k\cdot p - 1$ vertices of $T$ had their edges to the opposite vertices deleted. For these vertices, the replacement edge to $K$ counts $1$ toward $\tilde m_F(T)$, while for the other vertices, the edge to the opposite vertex counts $1/2$ toward $\tilde m_F(T)$. We thus have
\begin{align}\label{eq: tilde m} \tilde m_F(T) &= \ell p + \sum_{\ell'=1}^{(\delta - \ell - 1)/2} (p + \ell') + \frac{1}{2}p + \frac{1}{2} (2t/k\cdot p - 1)\nonumber
\\&\geq p(\ell + (\delta - \ell - 1)/2 + 1/2 + t/k) + \frac{1}{2}\cdot\frac{\delta-\ell-1}{2}\left(\frac{\delta-\ell-1}2+1\right) - 1/2\nonumber
\\&\geq p(\delta/2 + \ell/2 +t/k) + 1/2.
\end{align}

We now take on a second subcase: if $S'$ has a single string with $p > k/2$ vertices, 
then \eqref{eq: tilde m} gives $\gamma_F(S') \geq (\tilde m_F(S')-1)/|S'|= \delta / 2 + \ell/2 + t/k - 1/(2p) \geq \delta / 2 + \ell/2 + t/k - 1/k = a/b$.

Finally, we consider the only remaining case, where $S'$ has at least two strings. Then, \eqref{eq: tilde m} tells us that $\tilde m_F(S') \geq |S'|(\delta/2 + \ell/2 + t/k) + 1$, and so $\gamma_F(S) \geq \delta/2 + \ell/2 + t/k \geq a/b$.

This concludes Case 1.

\textbf{Case 2}. In this case, any gap in $S'$ has at most $\floor{(\delta-\ell)/2} - 1$ vertices. Again, we split into two subcases.

\textbf{Case 2a:} There exists a set of at least $\floor{(\delta-\ell-1)/2}$ consecutive vertices in $S'$. 
Without loss of generality, say the vertices labeled $0, \ldots, p-1$ are in $S'$ but $p$ is not, where $p\geq \floor{(\delta-\ell-1)/2}$. 
We show that $|N_F(p) \sm S'| \leq \frac{\delta+\ell}2$, contradicting that $p\not\in S'$ and showing that in fact all the vertices $0, \ldots, k-1$ of $G$ are in $S'$. This is a contradiction to $\gamma_F(S') < a/b$ since $\gamma_F(G) = a/b$.

To prove that $|N_F(p) \sm S'| \leq \frac{\delta+\ell}2$, first note that since $p \geq \floor{(\delta-\ell-1)/2}$, all of the $\floor{(\delta-\ell-1)/2}$ vertices immediately to the left of $p$ are already in $S'$, and in our construction they all have edges with $p$. 
%(If $\delta - \ell$ is even, we are saying that vertices with labels at most $(\delta - \ell)/2 - 1$ away from $p$ have edges to $p$, and if $\delta - \ell$ is odd, we are saying that vertices with labels at most $(\delta - \ell - 1)/2 = \floor{(\delta - \ell)/2}$ away from $p$ have edges to $p$.) 
These vertices do not count toward $|N_F(p)\sm S'|$.
Further, since there are no gaps of size $\floor{(\delta-\ell)/2} - 1$ in $S'$, some vertex with label between $p+1$ and $p+\floor{(\delta-\ell)/2} - 2$ must be in $S'$. Since this vertex  also does not count toward $|N_F(p)\sm S'|$, we see that $|N_F(p)\sm S'| \leq \delta - (\floor{(\delta-\ell-1)/2}) - 1 \leq \delta/2 + \ell/2$.

\textbf{Case 2b:} Any set of consecutive vertices in $S'$ has size at most $\floor{(\delta-\ell-1)/2}-1$. 
Note that $m_F(S')$ equals the sum over vertices of $S'$ of $1$ times the number of edges from that vertex to a vertex not in $S'$ plus $1/2$ times the number of edges from that vertex to a vertex in $S'$. 

Consider a vertex $v\in S'$. Recall that $v$ has an edge to every vertex with label at most $\floor{(\delta-\ell-1)/2}$ away. Since there are no sets of $\floor{(\delta - \ell - 1)/2}$ consecutive vertices in $S'$, we see that $v$ has an edge to some vertex to its left not in $S'$ with label at most $\floor{(\delta-\ell-1)/2}$ away, as well as an edge to some vertex to its right not in $S'$ with label at most $\floor{(\delta-\ell-1)/2}$ away. The vertex $v$ also has $\ell$ edges to vertices in $K$, which are not in $S'$. 
Thus, counting all the edges with exactly one end in $S'$ and with both ends in $S'$, we obtain $m_F(S') \geq |S'| \left((2 + \ell) + 1/2 \cdot (\delta - 2 - \ell)\right) = |S'|(\delta/2 + \ell/2 + 1)$. Clearly $|S'| \geq 2$, so $\gamma_F(S') \geq \delta/2 + \ell/2 + 1 - 1/2 \geq a/b$. This concludes the proof.
\end{proof}

\begin{remark}
The construction in the proof of \cref{thm:dense} is completely deterministic. An earlier version of this paper proved a weaker version of the theorem by taking $G$ to be a random regular graph and using its edge expansion.
\end{remark}

\section{Concluding remarks}\label{conclusion}
\subsection{Tuza's conjecture and a question of Terekhov and Zhukovskii}
We first return to the question of Tuza's \cref{conj:tuza}, which is still open when $w_F \in [\delta_F/2, \delta_F-1]$. We remark that for all of the graphs $F$ we construct in the proofs in this paper, we indeed have $\wsat(n, F) = w_F n + O(1)$. This is easy to see since the proof of our main tool for constructions, \cref{lem:f-tilde}, already shows that the graph $\tilde F$ guaranteed by that lemma has $\wsat(n, \tilde F) = \gamma_F n + O(1)$.

Terekhov and Zhukovskii \cite{tz23} gave some intuition for Tuza's conjecture by remarking that for many graphs $F$, an asymptotically minimum sequence of weakly $F$-saturated graphs can be formed by repeatedly copying some part of $F$ minus an edge. They asked if $w_F$ is always equal to $\gamma_F(S)$ for some $S\subseteq V(F)$.

Here we answer their question in the negative. Let $H$ be the Cayley graph $\Gamma(\Z/7, \{\pm1, \pm2\})$, with one additional edge (say from vertex 0 to vertex 3). Let $K_k$ denote a clique of size $k$. Form the graph $F$ by taking the disjoint union of $H$, $K_7$, and $K_{100}$, and then placing one edge from each of two distinct vertices of the $K_7$ to a vertex of the $K_{100}$ (see Figure \ref{fig:F15/7}). We claim that $w_F =  15/7$, which is clearly not attained by any $\gamma_F(S)$. 

\begin{figure}[!htb]
\centering
\begin{tikzpicture}[x=0.75pt,y=0.75pt,yscale=-2,xscale=2]
\draw   (149.37,133.18) -- (135.83,150.13) -- (114.14,150.12) -- (100.62,133.14) -- (105.47,112) -- (125.02,102.6) -- (144.56,112.03) -- cycle ;
\draw   (218.16,133.13) -- (204.62,150.08) -- (182.92,150.07) -- (169.41,133.1) -- (174.25,111.95) -- (193.81,102.55) -- (213.35,111.98) -- cycle ;
\draw   (230,100.6) -- (279,100.6) -- (279,150.6) -- (230,150.6) -- cycle ;
\draw  [fill={rgb, 255:red, 0; green, 0; blue, 0 }  ,fill opacity=1 ] (122.72,102.6) .. controls (122.72,101.33) and (123.75,100.3) .. (125.02,100.3) .. controls (126.29,100.3) and (127.32,101.33) .. (127.32,102.6) .. controls (127.32,103.87) and (126.29,104.9) .. (125.02,104.9) .. controls (123.75,104.9) and (122.72,103.87) .. (122.72,102.6) -- cycle ;
\draw  [fill={rgb, 255:red, 0; green, 0; blue, 0 }  ,fill opacity=1 ] (103.17,112) .. controls (103.17,110.73) and (104.2,109.7) .. (105.47,109.7) .. controls (106.74,109.7) and (107.77,110.73) .. (107.77,112) .. controls (107.77,113.27) and (106.74,114.3) .. (105.47,114.3) .. controls (104.2,114.3) and (103.17,113.27) .. (103.17,112) -- cycle ;
\draw  [fill={rgb, 255:red, 0; green, 0; blue, 0 }  ,fill opacity=1 ] (98.32,133.14) .. controls (98.32,131.87) and (99.35,130.84) .. (100.62,130.84) .. controls (101.89,130.84) and (102.92,131.87) .. (102.92,133.14) .. controls (102.92,134.42) and (101.89,135.44) .. (100.62,135.44) .. controls (99.35,135.44) and (98.32,134.42) .. (98.32,133.14) -- cycle ;
\draw  [fill={rgb, 255:red, 0; green, 0; blue, 0 }  ,fill opacity=1 ] (111.84,150.12) .. controls (111.84,148.85) and (112.87,147.82) .. (114.14,147.82) .. controls (115.41,147.82) and (116.44,148.85) .. (116.44,150.12) .. controls (116.44,151.39) and (115.41,152.42) .. (114.14,152.42) .. controls (112.87,152.42) and (111.84,151.39) .. (111.84,150.12) -- cycle ;
\draw  [fill={rgb, 255:red, 0; green, 0; blue, 0 }  ,fill opacity=1 ] (142.26,112.03) .. controls (142.26,110.76) and (143.29,109.73) .. (144.56,109.73) .. controls (145.83,109.73) and (146.86,110.76) .. (146.86,112.03) .. controls (146.86,113.3) and (145.83,114.33) .. (144.56,114.33) .. controls (143.29,114.33) and (142.26,113.3) .. (142.26,112.03) -- cycle ;
\draw  [fill={rgb, 255:red, 0; green, 0; blue, 0 }  ,fill opacity=1 ] (147.07,133.18) .. controls (147.07,131.91) and (148.1,130.88) .. (149.37,130.88) .. controls (150.64,130.88) and (151.67,131.91) .. (151.67,133.18) .. controls (151.67,134.45) and (150.64,135.48) .. (149.37,135.48) .. controls (148.1,135.48) and (147.07,134.45) .. (147.07,133.18) -- cycle ;
\draw  [fill={rgb, 255:red, 0; green, 0; blue, 0 }  ,fill opacity=1 ] (133.53,150.13) .. controls (133.53,148.86) and (134.56,147.83) .. (135.83,147.83) .. controls (137.1,147.83) and (138.13,148.86) .. (138.13,150.13) .. controls (138.13,151.4) and (137.1,152.43) .. (135.83,152.43) .. controls (134.56,152.43) and (133.53,151.4) .. (133.53,150.13) -- cycle ;
\draw  [fill={rgb, 255:red, 0; green, 0; blue, 0 }  ,fill opacity=1 ] (191.51,102.55) .. controls (191.51,101.28) and (192.54,100.25) .. (193.81,100.25) .. controls (195.08,100.25) and (196.11,101.28) .. (196.11,102.55) .. controls (196.11,103.82) and (195.08,104.85) .. (193.81,104.85) .. controls (192.54,104.85) and (191.51,103.82) .. (191.51,102.55) -- cycle ;
\draw  [fill={rgb, 255:red, 0; green, 0; blue, 0 }  ,fill opacity=1 ] (211.05,111.98) .. controls (211.05,110.71) and (212.07,109.68) .. (213.35,109.68) .. controls (214.62,109.68) and (215.65,110.71) .. (215.65,111.98) .. controls (215.65,113.25) and (214.62,114.28) .. (213.35,114.28) .. controls (212.07,114.28) and (211.05,113.25) .. (211.05,111.98) -- cycle ;
\draw  [fill={rgb, 255:red, 0; green, 0; blue, 0 }  ,fill opacity=1 ] (215.86,133.13) .. controls (215.86,131.86) and (216.89,130.83) .. (218.16,130.83) .. controls (219.43,130.83) and (220.46,131.86) .. (220.46,133.13) .. controls (220.46,134.4) and (219.43,135.43) .. (218.16,135.43) .. controls (216.89,135.43) and (215.86,134.4) .. (215.86,133.13) -- cycle ;
\draw  [fill={rgb, 255:red, 0; green, 0; blue, 0 }  ,fill opacity=1 ] (202.32,150.08) .. controls (202.32,148.81) and (203.35,147.78) .. (204.62,147.78) .. controls (205.89,147.78) and (206.92,148.81) .. (206.92,150.08) .. controls (206.92,151.35) and (205.89,152.38) .. (204.62,152.38) .. controls (203.35,152.38) and (202.32,151.35) .. (202.32,150.08) -- cycle ;
\draw  [fill={rgb, 255:red, 0; green, 0; blue, 0 }  ,fill opacity=1 ] (180.62,150.07) .. controls (180.62,148.8) and (181.65,147.77) .. (182.92,147.77) .. controls (184.19,147.77) and (185.22,148.8) .. (185.22,150.07) .. controls (185.22,151.34) and (184.19,152.37) .. (182.92,152.37) .. controls (181.65,152.37) and (180.62,151.34) .. (180.62,150.07) -- cycle ;
\draw  [fill={rgb, 255:red, 0; green, 0; blue, 0 }  ,fill opacity=1 ] (171.95,111.95) .. controls (171.95,110.68) and (172.98,109.65) .. (174.25,109.65) .. controls (175.52,109.65) and (176.55,110.68) .. (176.55,111.95) .. controls (176.55,113.22) and (175.52,114.25) .. (174.25,114.25) .. controls (172.98,114.25) and (171.95,113.22) .. (171.95,111.95) -- cycle ;
\draw  [fill={rgb, 255:red, 0; green, 0; blue, 0 }  ,fill opacity=1 ] (167.11,133.1) .. controls (167.11,131.82) and (168.14,130.8) .. (169.41,130.8) .. controls (170.68,130.8) and (171.71,131.82) .. (171.71,133.1) .. controls (171.71,134.37) and (170.68,135.4) .. (169.41,135.4) .. controls (168.14,135.4) and (167.11,134.37) .. (167.11,133.1) -- cycle ;
\draw    (218.16,133.13) -- (238.53,138.13) ;
\draw    (213.35,111.98) -- (240.13,113.73) ;
\draw    (105.47,112) -- (144.56,112.03) ;
\draw    (125.02,102.6) -- (149.37,133.18) ;
\draw    (144.56,112.03) -- (135.83,150.13) ;
\draw    (100.62,133.14) -- (135.83,150.13) ;
\draw    (100.62,133.14) -- (125.02,102.6) ;
\draw    (114.14,150.12) -- (149.37,133.18) ;
\draw    (105.47,112) -- (114.14,150.12) ;
\draw    (125.02,102.6) -- (135.83,150.13) ;
\draw (187.6,124.13) node [anchor=north west][inner sep=0.75pt]   [font=\Large] {$K_{7}$};
\draw (244.4,120.93) node [anchor=north west][inner sep=0.75pt]  [font=\Large]  {$K_{100}$};
\draw (116.8,122.73) node [anchor=north west][inner sep=0.75pt]  {$H$};
\end{tikzpicture}
\caption{The graph $F$.}
\label{fig:F15/7}
\end{figure}

To see that $w_F\leq 15/7$, form a sequence of graphs by taking $G_0$ to be $K_{107}$ and, for $i\geq1$, letting $G_i$ be the disjoint union of $G_{i-1}$ and the Cayley graph $\Gamma(\Z/7, \{\pm1, \pm2\})$, with one edge between a vertex of the Cayley graph and a vertex of the $K_{107}$ inside $G_{i-1}$. Observe that $\lim_{i\to\infty} e(G_i)/|V(G_i)| = 15/7$. We claim that $G_i$ is weakly saturated for $i\geq 1$. We proceed by induction on $i$; in the base case of $i=1$, we can first restore the edges inside the Cayley graph, forming a copy of $H$ in $G_1$ (and hence a new copy of $F$) with each edge restored. Then we place all edges from the Cayley graph to the $K_{107}$; each such edge forms a new copy of the $K_7$ attached to the $K_{100}$ by two edges which appear in $F$. For the induction step, starting with $G_i$, first restore the edges inside $G_{i-1}$ using the induction hypothesis. Then, continue in a similar manner as the base case: restore the edges inside the new copy of the Cayley graph, then place all edges from the new copy of the Cayley graph to the copy of $G_{i-1}$.

To prove $w_F\ge 15/7$, first observe that each component of our graph $F$ is two-edge-connected, so if a graph $G$ has multiple connected components, it cannot be weakly $F$-saturated as it is impossible to restore any edge between them. We need the following claim:
\begin{claim*} For any $\emptyset\subsetneq S \subseteq V(F)$ such that $S$ does not contain $H$, we have $\gamma_F(S) > 15/7$.\end{claim*}
\begin{proof} First recall that if $S$ has vertices in both connected components, then restricting to just one of the two results in a lower value of $\gamma_F(S)$, so we can assume $S\subsetneq H$ or $S\subseteq F\setminus H$. In the former case, easy casework by checking $|S| = 1, 2, \ldots, 6$ directly shows that $m_F(S) > 15/7$. In the latter case, since all vertices in $F\setminus S$ have degree at least $6$, we have $m_F(S) \geq 3|S|$. If $|S|\leq 2$, then we always have $\gamma_F(S)\geq 6$, while otherwise, $(m_F(S)-1)/|S| \geq 3-1/|S| > 15/7$.\end{proof}

Now, let $G\in\uwSAT(n, F)$. Consider the activation partition $\mathcal A$ and edge ownership of $G$ (see \cref{sparse section}). Recall that when a new edge $e$ is restored, a new copy $F'$ of $F$ is formed; the inactive vertices of $G$ which are used in $F'$ are activated and we call those vertices $a(e)\in \mathcal A$, and the edges of $G$ incident to $a(e)$ which are used in $F'$ are owned by $a(e)$. We modify the definition of ownership slightly: if $a(e)$ contains all seven vertices of $H$ in $F'$, then $a(e)$ additionally owns an edge between one of those vertices and a vertex in $F'\setminus H$. Such an edge, which is not part of $F'$, must exist in $G$ because $G$ is connected. 

After this modification, it is still the case that each edge of $G$ is owned by at most one part of the activation partition (see \cref{prop:ownership}). Indeed, if a part $a(e)$ is given ownership of an edge $e'$ which is not part of the copy $F'$ formed by $e$, then all the vertices of $H$ in $F'$ are in $a(e)$, and $e'$ is incident to one of those vertices. This means that $e'$ could not be owned by a previously activated part; if it were, then one of the vertices of $H$ in $F'$ would be in that part of the activation partition instead of $a(e)$. 

Let $m^*(a)$ denote the number of edges of $G$ owned by $a\in \mathcal A$, under this new definition of ownership. Since the addition of one edge allows the formation of a copy of $F$ using the vertices $a$, we see that $m^*(a) \geq m_F(a) - 1$. Furthermore, if $a$ contains all the vertices of $H$ in the copy of $F$, then $a$ owns an additional edge which is not part of the copy of $F$, meaning that $m^*(a) \geq m_F(a)$.  By the Claim and the fact that $m_F(S)/|S| \geq 15/7$ for every $S\subseteq V(F)$ containing $H$, we always have $m^*(a) \geq 15|a|/7$.
So, $$e(G) \geq \sum_{a\in \mathcal A} m^*(a) \geq 15n/7,$$ where we used the fact that the parts $a\in \mathcal A$ partition the $n$ vertices of $G$.

\subsection{Other open questions}
\cref{thm:main} raises the natural question of whether weak saturation limits must be rational. We conjecture that the answer is yes.

\begin{conj}
For any graph $F$, $w_F$ is rational.
\end{conj}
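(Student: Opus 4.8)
The plan is to pin $w_F$ between a rational lower bound coming from a finite ``discharging'' optimization and a rational upper bound coming from a periodic family of constructions, and then argue that the two coincide. Both halves generalize ingredients already present above: the lower bound side extends the counting behind \cref{lem:tzbound} and \cref{lem:goodpartition}, while the upper bound side extends the cell-copying constructions of \cref{lem:f-tilde} and of the $15/7$ example in \cref{conclusion}.

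For the lower bound, I would try to prove a statement of the form $\wsat(n,F)\ge c_F\,n-O_F(1)$, where $c_F$ is the optimum of an explicit linear program whose constraints are indexed by the possible ``local reasons'' a weakly $F$-saturated graph must be dense. Roughly, for each bounded vertex set that can arise as (a rotated image of) the inactive vertices activated by a single edge of a bootstrap process, the activation and minimality conditions force that set to carry at least a certain edge density, with rational coefficients depending only on $F$ --- exactly the kind of bound extracted in \cref{lem:goodpartition}, where each part of the activation partition owns at least $\frac\delta2|a_i|+\frac12 x(a_i)-1$ edges. Since there should be only finitely many such configurations up to isomorphism (their sizes boundable in terms of $F$, just as $|p_0|\le k+1$ there), the resulting program has rational data and rational optimum $c_F$.

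For the upper bound, I would take a ``cell'' $H$ realizing the LP optimum, attach a bounded number of edges from each cell to a large common clique $K$ in the prescribed pattern (one edge per subdivision vertex when $\delta_F=3$, two when $\delta_F=4$, and analogously in general), and tile $\Theta(n/|V(H)|)$ disjoint cells onto the remaining vertices. Running the $F$-bootstrap process cell by cell --- restoring each cell's internal edges, then its attaching edges, then the clique --- as in \cref{lem:f-tilde} shows the result is weakly $F$-saturated, giving $\wsat(n,F)\le c_F\,n+O_F(1)$, hence $w_F=c_F\in\Q$.

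The hard part is establishing that near-minimum weakly $F$-saturated graphs are genuinely governed by finitely many bounded cell-types, i.e.\ that the rotation machinery of \cref{sparse section} has an analogue valid for \emph{all} $F$ rather than only when $w_F<\delta_F/2$. One needs both (i) a bound, depending on $F$ alone, on the complexity of the parts of an optimal activation partition after rotation, and (ii) a matching ``each part is dense'' argument in the spirit of the out-connected mixed multigraph claim. In the dense regime the relevant cells can be large and intricately wired to the clique --- the $15/7$ example shows $w_F$ need not equal any $\gamma_F(S)$ --- so a new idea beyond disjoint-component rotations appears to be required. A plausible substitute is a local-weak-limit argument: pass to a unimodular limit of minimum weakly $F$-saturated graphs, whose expected degree is $2w_F$, and show the limit object inherits enough of the bootstrap and minimality structure to force a rational expected degree; carrying this out is itself the principal obstacle.
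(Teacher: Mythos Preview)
The statement you are attempting to prove is stated in the paper as a \emph{conjecture}, not a theorem: the paper contains no proof of it. The authors explicitly leave it open, noting only that \cref{thm:sparse} settles the special case $w_F<\delta_F/2$ and that ``there could be irrational values of $w_F$ between $\delta_F/2$ and $\delta_F-1$.'' So there is no paper proof to compare your proposal against.

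As for the proposal itself, it is not a proof but a research programme, and to your credit you say so: the passage beginning ``The hard part is establishing\ldots'' identifies exactly the missing ingredient. Let me make the gap more concrete. Your lower-bound LP hinges on the claim that the possible ``local cells'' in an optimal activation partition have sizes bounded in terms of $F$ alone. In the sparse regime the paper gets $|p_0|\le k+1$ only by assuming $w_F<\delta/2-1/(k+1)$ and then deriving a contradiction from the density inequality \eqref{m* upper bound}; there is no analogous a priori bound when $w_F\ge\delta/2$, and indeed nothing in the paper suggests one. Without such a finiteness statement your LP has infinitely many variables and constraints, and rationality of its optimum is not automatic. On the upper-bound side, ``take a cell $H$ realizing the LP optimum'' presupposes that the optimum is attained by a finite configuration and that copies of $H$ attached to a clique are weakly $F$-saturated; the $15/7$ example already shows the right cell need not sit inside $F$, so one would need a genuine characterization of which cells work, not just the copy-of-$F$-minus-an-edge trick from \cref{lem:f-tilde}. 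The local-weak-limit suggestion is intriguing but, as you note, proving that a unimodular limit of minimum weakly $F$-saturated graphs has rational expected degree is itself the whole problem restated in a different language.

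In short: there is nothing incorrect in your outline, but it does not constitute a proof, and the paper does not claim one either.
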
 

This would also follow from the stronger statement that once $n$ is large enough, minimum weakly $F$-saturated graphs on $n$ vertices can be formed by copying a fixed subgraph many times. Note that \cref{thm:sparse} implies that if $w_F < \delta_F/2$, then $w_F$ is rational; however, there could be irrational values of $w_F$ between $\delta_F/2$ and $\delta_F-1$.% 

We close by discussing an analog of weak saturation limits for hypergraphs. Recently, Shapira and Tyomkyn \cite{ST23} proved that for any $r$-uniform hypergraph $H$, the limit $C_H:= \lim_{n\to\infty} \wsat(n, H) / n^{s-1}$ exists, where $s = s(H)$ is the minimum size of a vertex subset contained in exactly one edge of $H$. Another proof was later provided by Terekhov~\cite{t25}. It would be interesting to characterize the possible values of $C_H$ for any given uniformity $r \ge 3$.
\\\\
\textbf{Acknowledgments.} We extend our thanks to Yuval Wigderson, Hung-Hsun Hans Yu and Maksim Zhukovskii for helpful comments and conversations, and to Nikolai Terekhov for pointing out an error in an early version of this manuscript. We also thank an anonymous referee for detailed feedback on our preprint. The first author was partially funded by the Georgia Tech ARC-ACO fellowship.

\end{document}